\newtheorem{theorem}{Theorem}[section]
\newtheorem{lemma}[theorem]{Lemma}
\newtheorem{corollary}[theorem]{Corollary}
\newtheorem{prop}{Proposition}[section]
\begin{document}
\title{ Bowen entropy of sets of  generic points for fixed-point free flows
       \footnotetext {*Corresponding author}
		\footnotetext {2010 Mathematics Subject Classification: 37B40, 37C45}}

\author{Yunping Wang$^1$, Ercai Chen$^{1*}$ , Ting Wu$^1$, Zijie Lin$^{1}$\\
\small1    School of Mathematical Sciences and Institute of Mathematics, Nanjing Normal University,\\
\small   Nanjing 210046, Jiangsu, P.R.China\\

\small    yunpingwangj@126.com,
 ecchen@njnu.edu.cn,
 swuting@126.com, zilin137@126.com
}

\date{}

\maketitle{}
\begin{abstract}
Let $(X, \phi)$ be a compact metric flow without fixed points. We will be concerned with the entropy of flows which  takes into consideration all possible reparametrizations of the flows.
In this paper,  by  establishing  the Brin-Katok's entropy  formula  for flows without fixed  points in non-ergodic case, we prove the  following result: for an ergodic $\phi$-invariant measure $\mu$,
	$$ h_{top}^{B}(\phi, G_{\mu}(\phi))=h_{\mu}(\phi_{1}),$$
	where $G_{\mu}(\phi)$ is the set of generic points for $\mu$ and $h_{top}^{B}(\phi, G_{\mu}(\phi))$ is the Bowen entropy  on $G_{\mu}(\phi)$.  This extends the classical result of  Bowen in 1973 to fixed-point free flows. Moreover, we show that the Bowen entropy can be determined via the local entropies of measures.
\end{abstract}
\noindent
\textbf{ Keywords:} Generic point,  reparametrization balls, Brin-Katok's formula,  fixed-point free flows, Bowen entropy

\section{Introduction}
Throughout  the paper by a flow we mean a pair $(X, \phi)$, where $(X, d)$ is a compact metric  space with metric $d$,   and $\phi:X\times\mathbb{R}\to X$ a continuous flow on $X$, that is , $\phi_{t}:X\rightarrow X$ is a homeomorphism given by $\phi_{t}(x)=\phi(x,t)$ for each $t\in \mathbb{R}$ and satisfies $\phi_{t}\circ\phi_{s}=\phi_{s+t}$ for each $t,s \in \mathbb{R}$.
 A Borel probability measure $\mu$ on $X$ is called $\phi$-invariant if for any Borel set $B$, it holds $\mu(\phi_{t}(B)) = \mu(B)$ for all $t\in \mathbb{R}$. It is called ergodic if any $\phi$-invariant Borel set has measure $0$ or $1$. The set of all Borel probability measures, all $\phi$-invariant Borel probability measures and all ergodic $\phi$-invariant Borel probability measures on $X$ are denoted by $\mathcal{M}(X)$, $\mathcal{M}_{\phi}(X)$ and $\mathcal{E}_{\phi }(X)$, respectively.

The notion of entropy plays a crucial role  in quantifying  the degree  of "disorder" in the systems. For a flow,  it is sometimes useful to represent measure-theoretic entropy of time one map by the whole flow itself. However, an invariant measure for time one map is not, in general, flow invariant and similarly, an ergodic measures for a flow is not necessarily ergodic for time one map. Thus there exist significant non-parallel gradients  between flows and its discrete sample. There are some fruitful  works devoting to investigate a proper definition of the entropy of a flow.
In  \cite{Abramov}, Abramov constructed   Abramov entropy formula, that is , $h_{\mu}(\phi_{t}) = |t|h_{\mu}(\phi_{1})$ for all $t\in \mathbb{R}$ which reveals the basic relationship between measure-theoretic entropy of time one map and measure-theoretic entropy of flows. 
   Bowen \cite{Bow2} gave the definition of topological entropy for one parameter flows on compact metric spaces and  proved that defined topological  entropy  of a flow  is  equivalent to  the  topological entropy  of time one map.  Sun and Vergas defined  both measure-theoretic entropy  and topological entropy in \cite{Sun2} and proved that so defined measure-theoretic entropy  and topological entropy were both equal to that of time one map. Shen and Zhao \cite{Zhao} established the  variational principle between topological entropy and measure-theoretic entropy of a flow and constructed the Brin-Katok's entropy formula for a flow. 
   
   All of the above results were studied in the case of the usual Bowen ball. It was defined by  $$B_{t}(x , \epsilon, \phi)=\left\lbrace y\in X: d(\phi_{s} x, \phi_{s}y)< \epsilon, \forall ~ 0\leq s \leq t\right\rbrace.$$ 
   However,  in this paper, we consider   reparametrization balls. For convenience, we review some of  the standard  facts on  reparametrizations.
   Let $I$ be a closed interval  which contains the origin, a continuous map $\alpha: I\to \mathbb R $ is said to be a reparametrization if it is a homeomorphism onto its image and $\alpha (0)=0.$ Define
   $Rep(I)$ to be the set of all  reparametrizations on $I$.  For a flow $\phi $ on $X$, given $x\in X, t\geq 0$ and $\epsilon >0$, we put
   $$B(x, t, \epsilon ,\phi )=\{y\in X: \exists~\alpha \in Rep[0, t]~\text{s.t.}~d(\phi_{\alpha(s)}x, \phi_s y)<\epsilon, \forall~0\leq s\leq t\},$$and call such set a $(t, \epsilon ,\phi)$-ball or a reparametrization ball. Clearly, all the reparametrization balls are open sets. To investigate the topological entropies of mutually conjugate expansive flows, Thomas \cite{Thoma1} introduced the entropy for flows which raised from allowing reparametrizations of orbits. Subsequently, he showed  that his definition of entropy was equivalent to Bowen's definition for any flow without fixed points on compact metric spaces in \cite{Thoma2}. 
    
 In recent years, reparametrization balls attract a lot of attention. In \cite{Sun1}, following  the ideas  of Katok's entropy formula,  they  defined the measure-theoretic entropy of  a flow by using  the reparametrization balls and showed that so defined measure-theoretic entropy was equal to that of time one map when the measure is ergodic.  
    Recently, Dou etc. \cite{Dou}  introduced  Bowen entropy  for compact metric flows through  reparametrization balls and  established a variational principle which generalized the result in \cite{Huang}. Meanwhile,  they defined lower and upper measure-theoretic entropy for any Borel probability measure $\mu$,
    $$\underline{h}_{\mu}(\phi)=\int \underline{h}_{\mu}(\phi, x) d\mu  ~\text{and}~ \overline{h}_{\mu}(\phi)=\int \overline{h}_{\mu}(\phi, x) d\mu,$$ where
    $$ \underline{h}_{\mu}(\phi, x)=\lim\limits_{\epsilon \to 0} \liminf \limits_{t\to \infty } -\frac{1}{t} \log \mu(B(x, t, \epsilon, \phi ))$$
    and
    $$ \overline{h}_{\mu}(\phi, x)=\lim\limits_{\epsilon \to 0} \limsup \limits_{t\to \infty } -\frac{1}{t} \log \mu(B(x, t, \epsilon, \phi )).$$ 
   They  raised a question whether  $\underline{h}_{\mu}(\phi)=\underline{h}_{\mu}(\phi_{1})$ and $\overline{h}_{\mu}(\phi)=\overline{h}_{\mu}(\phi_{1})$  hold for every Borel probability  measure $\mu$.
Recently, we \cite{YJ}  established  the Brin-Katok's formula for compact metric flows without fixed points in ergodic case, which partially  gives   a positive  answer to the above question. Motivated by this work, we prove the Brin-Katok's formula for compact metric flows without fixed points in non-ergodic case which is useful for the proof of Theorem \ref{main}. Let $C(X)$ denote the space of real-value continuous functions of $X$ equipped with the supremum  norm. The terms $\mathcal{M}_{\phi_{1}}(X)$ and $\mathcal{E}_{\phi_{1}}(X)$ represent the sets of all $\phi_{1}$-invariant Borel probability measures and ergodic $\phi_{1}$-invariant  Borel probability measures,  respectively. 
For $ \mu \in \mathcal {M}_{\phi_{1}}(X),$
put
$$ G_{\mu}=\{x\in X: \lim\limits_{n\rightarrow \infty}\frac{1}{n}\sum\limits_{k=0}^{n-1}f(\phi_{1}^{k}x)= \int_{X} f d \mu,  ~~ \forall f \in  C(X)\}$$
be the set of generic points of $\mu$. By Birkhoff's Ergodic Theorem and Ergodic Decomposition Theorem, we have $\mu( G_{\mu})=1$ if $\mu$ is  ergodic for  $\phi_{1}$ and
$\mu( G_{\mu}) = 0$ if $\mu$ is not   ergodic for $\phi_{1}$.  Bowen \cite{Bow1} defined a kind of topological entropy with
characteristic of dimension type which is so-called Bowen entropy and proved the following remarkable result.
\begin{theorem}\rm\cite{Bow1}
	Let $(X, d)$  be a  compact metric space, $\phi_{1}: X \rightarrow X$ be a continuous map and $\mu\in \mathcal{E}_{\phi_{1}}(X)$. Then
	$$ h_{top}^{B}(\phi_{1}, G_{\mu})=h_{\mu}(\phi_{1}),$$
	where $h_{top}^{B}(\phi_{1},G_{\mu} ) $ is the Bowen entropy of $G_{\mu}$ for time-one map.
\end{theorem}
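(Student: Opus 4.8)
We prove the two inequalities $h_{top}^{B}(\phi_1, G_\mu)\ge h_\mu(\phi_1)$ and $h_{top}^{B}(\phi_1, G_\mu)\le h_\mu(\phi_1)$ separately; throughout, $B_n(x,\epsilon)=\{y: d(\phi_1^k x,\phi_1^k y)<\epsilon,\ 0\le k< n\}$ denotes the usual Bowen ball of the homeomorphism $\phi_1$.

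\emph{Lower bound.} The plan is to combine the Brin--Katok local entropy formula for $\phi_1$ with the mass distribution principle. Since $\mu$ is ergodic, for $\mu$-a.e.\ $x$ one has $\lim_{\epsilon\to0}\liminf_{n\to\infty}-\tfrac1n\log\mu(B_n(x,\epsilon))=h_\mu(\phi_1)$. Fix $\delta>0$; for $\epsilon$ small the set $Y_\epsilon$ of $x$ with $\liminf_n-\tfrac1n\log\mu(B_n(x,\epsilon))\ge h_\mu(\phi_1)-\delta$ has measure close to $1$, so, using $\mu(G_\mu)=1$, we may fix $\epsilon$ with $\mu(G_\mu\cap Y_\epsilon)>0$. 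Given any countable cover of $G_\mu\cap Y_\epsilon$ by balls $B_{n_i}(x_i,\epsilon/2)$ with the $n_i$ large, we may assume each ball meets $G_\mu\cap Y_\epsilon$, so $B_{n_i}(x_i,\epsilon/2)\subseteq B_{n_i}(y_i,\epsilon)$ for some $y_i\in G_\mu\cap Y_\epsilon$ and hence $\mu(B_{n_i}(x_i,\epsilon/2))\le e^{-n_i(h_\mu(\phi_1)-2\delta)}$ for $n_i$ large (a routine exhaustion of $Y_\epsilon$ handles the uniformity in $i$). Summing gives $\sum_i e^{-n_i(h_\mu(\phi_1)-2\delta)}\ge\mu(G_\mu\cap Y_\epsilon)>0$, so the Carath\'eodory sum defining $h_{top}^{B}$ stays bounded below, whence $h_{top}^{B}(\phi_1,G_\mu,\epsilon/2)\ge h_\mu(\phi_1)-2\delta$. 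Letting $\delta\to0$ gives the lower bound; note it uses only $\mu(G_\mu)=1$, not genericity.

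\emph{Upper bound.} This is the substantial half, and here genericity of \emph{every} point of $G_\mu$ — not merely of $\mu$-a.e.\ point — is essential, since a $\mu$-null subset of $X$ can a priori carry large Bowen entropy, so the Shannon--McMillan--Breiman theorem (which controls only a full-measure set) does not by itself suffice. Fix $\eta>0$ and $\epsilon>0$, choose a finite Borel partition $\mathcal P$ with $\operatorname{diam}\mathcal P<\epsilon$ and $\mu(\partial\mathcal P)=0$ (such partitions exist at every scale), and write $\mathcal P^{m}=\bigvee_{k=0}^{m-1}\phi_1^{-k}\mathcal P$; by invariance $\mu$ vanishes on the boundary of every element of every $\mathcal P^m$. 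Since $\tfrac1m H_\mu(\mathcal P^m)\downarrow h_\mu(\phi_1,\mathcal P)$ and $H_\mu(\mathcal P^m)-H_\mu(\mathcal P^{m-1})\downarrow h_\mu(\phi_1,\mathcal P)$, fix $m$ large and then $\delta>0$ small so that, by a method-of-types estimate, the number of words $w\in\mathcal P^n$ whose \emph{overlapping} $m$-block frequencies are all within $\delta$ of the vector $(\mu(C))_{C\in\mathcal P^{m}}$ is at most $e^{n(h_\mu(\phi_1,\mathcal P)+\eta)}$ for all $n$ large — the exponent being the maximal entropy of a shift-invariant law with the prescribed $m$-block marginal. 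Because $\mu(\partial C)=0$ for $C\in\mathcal P^m$, genericity gives $\tfrac1n\sum_{k=0}^{n-1}\mathbf 1_C(\phi_1^k x)\to\mu(C)$ for every $x\in G_\mu$ and every $C\in\mathcal P^m$; hence for each large $N$ and each $x\in G_\mu$ there is $n=n(x)\ge N$ for which the length-$n$ $\mathcal P$-name of $x$ is ``good'' in the above sense, and the cylinder of $\mathcal P^n$ containing $x$ lies in a single Bowen ball $B_n(\,\cdot\,,\epsilon)$ since $\operatorname{diam}\mathcal P<\epsilon$.

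Thus for each large $N$ we obtain a countable cover of $G_\mu$ by Bowen balls of depths $\ge N$ along which the Carath\'eodory sum is at most $\sum_{n\ge N}e^{n(h_\mu(\phi_1,\mathcal P)+\eta)}e^{-sn}$, which tends to $0$ as $N\to\infty$ once $s>h_\mu(\phi_1,\mathcal P)+\eta$. Hence $h_{top}^{B}(\phi_1,G_\mu,\epsilon)\le h_\mu(\phi_1,\mathcal P)+\eta\le h_\mu(\phi_1)+\eta$; letting $\epsilon\to0$ (keeping $\operatorname{diam}\mathcal P<\epsilon$) and then $\eta\to0$ gives $h_{top}^{B}(\phi_1,G_\mu)\le h_\mu(\phi_1)$, which with the first part proves the theorem. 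I expect the method-of-types count of ``good'' words to be the main obstacle, together with the point — easy to overlook — that one must use \emph{overlapping} rather than non-overlapping $m$-blocks: non-overlapping block frequencies are not governed by $\phi_1$-genericity alone, so only the overlapping version holds at \emph{every} point of $G_\mu$ and hence bounds the Bowen entropy of all of $G_\mu$ rather than of a full-measure subset.
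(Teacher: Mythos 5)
Your argument is correct, but note that the paper does not actually prove this statement: it is quoted from Bowen's 1973 paper as the classical prototype, and the paper's own proofs concern the flow analogues (Theorems 1.2 and 1.3). Measured against those, your two halves take genuinely different routes. For the lower bound the paper combines the (non-ergodic) Brin--Katok formula with the variational principle $h_{top}^{B}(\phi,K)=\sup\{\underline{h}_{\nu}(\phi):\nu(K)=1\}$ applied to restrictions of $\mu$ to compact subsets of $Y$; you instead feed Brin--Katok directly into the mass distribution principle. These are essentially the same mechanism (the variational principle is itself proved by mass distribution), so your version is a more elementary shortcut, and you correctly observe that only $\mu(G_{\mu})=1$ is used -- which is exactly the content of the paper's Theorem 1.2. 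For the upper bound the paper follows Pfister--Sullivan: it covers $G_{\mu}(\phi)$ by the sets $A_{n,m}$ of points whose empirical measures lie in a shrinking neighborhood of $\mu$, bounds the cardinality of maximal $(n,\epsilon)$-separated subsets via the convergence $H_{\mu_{n_j}}(\bigvee\phi_{-i}\beta)\to H_{\mu}(\bigvee\phi_{-i}\beta)$ for a partition with $\mu(\partial\beta)=0$, and then covers by $2\epsilon$-balls centered at the separated points. You instead count ``good'' $\mathcal{P}^{n}$-cylinders directly by a method-of-types estimate; this is in fact closer to Bowen's original 1973 argument than to the paper's. Both implementations rest on the same principle -- orbit segments of generic points have empirical measures near $\mu$, and there are at most $e^{n(h_{\mu}+\eta)}$ such segments -- but the paper gets the count from upper semicontinuity of static entropy on weak-$*$ limits, while you get it from a combinatorial bound on words with prescribed overlapping $m$-block frequencies. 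The one ingredient you assert rather than prove is that counting lemma; it is standard (it is essentially Bowen's Lemma 1, with exponent $H_{\mu}(\mathcal{P}^{m})-H_{\mu}(\mathcal{P}^{m-1})+o(1)$), and you state it with the correct exponent and correctly flag why overlapping rather than non-overlapping blocks are forced on you by pointwise genericity, so I would not call this a gap.
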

 It is natural  to ask: Does above the result also hold for compact metric flows without fixed points? 
In this paper, by using different approach of  Bowen's original proofs for $\mathbb{Z}$-actions,  we proved the following theorems:
\begin{theorem}\label{main2}
	Let $(X, \phi)$ be a compact metric flow without fixed points. For $\mu \in \mathcal{M}_{\phi}(X)$, if 
	 ~$Y \subset X $ and $\mu(Y)=1$, then $h_{\mu}(\phi_{1}) \leq h_{top}^{B}(\phi, Y).$
\end{theorem}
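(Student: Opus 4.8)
We adapt Bowen's argument from \cite{Bow1} for the inequality $h_\mu(\phi_1)\le h_{top}^{B}(\phi,Y)$; the role of the assumption ``no fixed points'' is to control how wild a reparametrization can be at small scales. Write $h=h_\mu(\phi_1)$; we may assume $h>0$, and it suffices to prove $h_{top}^{B}(\phi,Y)\ge s$ for an arbitrary fixed $s\in(0,h)$. The plan has three steps: first, use the non-ergodic Brin--Katok formula for fixed-point-free flows to extract, at a suitably small scale $\epsilon_0$, a Borel set $Z_0\subseteq Y$ with $\mu(Z_0)>0$ on which $\mu$-measures of reparametrization balls decay exponentially at a rate exceeding $s$; second, prove a geometric comparison lemma saying that a reparametrization $(t,\epsilon_0/2)$-ball meeting $Z_0$ sits inside a reparametrization $(\lambda t-C,\epsilon_0)$-ball centred at a point of $Z_0$, with $\lambda<1$ as close to $1$ as we wish once $\epsilon_0$ is small; third, feed this into the dimension-type construction defining $h_{top}^{B}(\phi,\cdot,\epsilon)$ from \cite{Dou} to obtain $h_{top}^{B}(\phi,Z_0,\epsilon_0/2)\ge s$, whence $h_{top}^{B}(\phi,Y)\ge h_{top}^{B}(\phi,Z_0)\ge s$ by monotonicity, and finally let $s\uparrow h$.

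\emph{Step 1 (localization of entropy).} By the non-ergodic Brin--Katok formula for fixed-point-free flows (established in the present paper; alternatively, apply the ergodic version from \cite{YJ} to a flow-ergodic component of $\mu$ of $\phi_1$-entropy $>s$), for $\mu$-a.e.\ $x$ the limits $\underline h_\mu(\phi,x)$ and $\overline h_\mu(\phi,x)$ coincide and $\int\underline h_\mu(\phi,x)\,d\mu(x)=h$. Fix $s'\in(s,h)$; then $\mu(\{x\in Y:\underline h_\mu(\phi,x)>s'\})>0$. For $k,l\in\mathbb N$ put $E_{k,l}=\{x\in Y:\mu(B(x,t,1/k,\phi))\le e^{-s't}\text{ for all }t\ge l\}$; these are Borel (a routine measurability check, using that the sets $\{(x,y):y\in B(x,t,\epsilon,\phi)\}$ are open in $X\times X$) and satisfy $E_{k,l}\subseteq E_{k',l}$ for $k\le k'$. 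Since $\underline h_\mu(\phi,x)=\lim_{\epsilon\to0}\liminf_{t\to\infty}-\tfrac1t\log\mu(B(x,t,\epsilon,\phi))$, the positive-measure set above is contained in $\bigcup_{k,l}E_{k,l}$, and as this union increases in $k$ we get $\mu(E_{k,m})>0$ for all sufficiently large $k$ and suitable $m=m(k)$. Fix such an $\epsilon_0=1/k$, small enough for Step 2, and set $Z_0=E_{k,m}$: then $\mu(Z_0)>0$ and $\mu(B(x,t,\epsilon_0,\phi))\le e^{-s't}$ for all $x\in Z_0$ and $t\ge m$.

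\emph{Step 2 (the crux: comparing reparametrization balls).} Here the absence of fixed points is essential. By the control of reparametrizations for fixed-point-free flows on compact metric spaces (see \cite{Thoma2,Sun1,YJ}), once $\epsilon_0$ is small there are $\lambda\in(0,1)$, as close to $1$ as we like, and $C>0$ such that every $\alpha\in Rep[0,t]$ with $d(\phi_{\alpha(r)}x,\phi_r y)<\epsilon_0$ on $[0,t]$ obeys $\lambda r-C\le\alpha(r)\le\lambda^{-1}r+C$ on $[0,t]$. Fix $y\in B(x,t,\epsilon_0/2,\phi)\cap Z_0$, witnessed by some $\beta\in Rep[0,t]$, and let $z\in B(x,t,\epsilon_0/2,\phi)$, witnessed by some $\alpha\in Rep[0,t]$. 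On the common range of $\alpha$ and $\beta$ the triangle inequality gives $d(\phi_{\beta^{-1}(\rho)}y,\phi_{\alpha^{-1}(\rho)}z)<\epsilon_0$; substituting $\rho=\alpha(u)$ shows $\gamma:=\beta^{-1}\circ\alpha\in Rep[0,T_z]$ with $T_z=\min\{t,\alpha^{-1}(\beta(t))\}$ and $d(\phi_{\gamma(u)}y,\phi_u z)<\epsilon_0$ on $[0,T_z]$. Applying the two-sided control to $\alpha$ and to $\beta$ forces $T_z\ge\lambda^2 t-C'$ with $C'$ independent of $z$, and since reparametrization balls shrink as the time grows this yields $B(x,t,\epsilon_0/2,\phi)\subseteq B(y,\lambda^2 t-C',\epsilon_0,\phi)$. (For $\mathbb Z$-actions one has the clean inclusion $B(x,t,\epsilon/2)\subseteq B(y,t,\epsilon)$ with no loss; the extra work here is exactly the exclusion of wild reparametrizations, which the fixed-point-free hypothesis makes possible.)

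\emph{Step 3 and conclusion.} Choose $\epsilon_0$ in Step 1 small enough that moreover $\lambda^2 s'\ge s$. Then, combining the two steps, for $t$ with $\lambda^2 t-C'\ge m$ any reparametrization $(t,\epsilon_0/2)$-ball meeting $Z_0$ has $\mu$-measure at most $e^{-s'(\lambda^2 t-C')}\le e^{s'C'}e^{-st}$. For any countable cover of $Z_0$ by reparametrization balls $B(x_i,t_i,\epsilon_0/2,\phi)$ with all $t_i$ large, discard those disjoint from $Z_0$ and obtain $\mu(Z_0)\le\sum_i\mu(B(x_i,t_i,\epsilon_0/2,\phi))\le e^{s'C'}\sum_i e^{-st_i}$, so $\sum_i e^{-st_i}\ge e^{-s'C'}\mu(Z_0)>0$. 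Passing to the infimum over such covers and letting the time-threshold tend to infinity, the $s$-dimensional quantity of $Z_0$ at scale $\epsilon_0/2$ that defines $h_{top}^{B}(\phi,Z_0,\epsilon_0/2)$ is at least $e^{-s'C'}\mu(Z_0)>0$; hence $h_{top}^{B}(\phi,Z_0,\epsilon_0/2)\ge s$, and therefore $h_{top}^{B}(\phi,Y)\ge h_{top}^{B}(\phi,Z_0)\ge s$. Letting $s\uparrow h$ gives $h_{top}^{B}(\phi,Y)\ge h_\mu(\phi_1)$. I expect Step 2 to be the main obstacle: making the composition of reparametrizations rigorous --- in particular the bookkeeping of the domains of $\alpha^{-1}$ and $\beta^{-1}$ and the uniform-in-$z$ lower bound on $T_z$ --- and isolating the exact form of the reparametrization-control lemma needed; the rest is the standard dimension-theoretic packing estimate of \cite{Bow1} transcribed to the flow setting.
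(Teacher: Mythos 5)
Your proof is correct, but it takes a genuinely different route from the one in the paper. The paper exhausts $Y$ by compact sets $Y_n$ with $\mu(Y_n)>1-\tfrac1n$, applies the variational principle of Dou--Fan--Qiu for compact subsets, $h_{top}^{B}(\phi,K)=\sup\{\underline h_\nu(\phi):\nu(K)=1\}$, to the normalized restrictions $\mu_n=\mu(\cdot\cap Y_n)/\mu(Y_n)$, bounds $\underline h_{\mu_n}(\phi)$ from below by $\tfrac{1}{\mu(Y_n)}\int_{Y_n}\underline h_\mu(\phi,x)\,d\mu$ using $\mu_n(B)\le\mu(B)/\mu(Y_n)$, and then invokes the non-ergodic Brin--Katok formula (Theorem 3.1) to identify the integral with $h_\mu(\phi_1)$. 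You instead combine Brin--Katok with a direct Frostman-type mass-distribution estimate: extract a positive-measure set on which ball measures decay at rate $s'>s$, upgrade a cover by $(t,\epsilon_0/2)$-balls to a cover by $(\lambda^2t-C',\epsilon_0)$-balls centred in that set, and conclude $\mathcal M^{s}_{\epsilon_0/2}(\phi,Z_0)>0$. This is in substance the proof of the paper's own Theorem 1.5(2) (whose argument never uses ergodicity of $\mu$), so your route amounts to the observation that Theorem 1.3 follows from Theorem 1.5(2) together with Theorem 3.1 --- a cleaner logical dependency than the paper exhibits. What you identify as the main obstacle, the ball-comparison in your Step 2, is exactly Lemma 6.1(2) of the paper (quoted from Dou et al.): if $y\in B(x,t,\epsilon/2,\phi)$ then $B(x,t,\epsilon/2,\phi)\subseteq B(y,(1-\eta)t,\epsilon,\phi)$ for $\epsilon<\theta(\eta)$, with the order of quantifiers (first $\eta$, then $\theta$, then $\epsilon_0<\theta$) matching what you need for $\lambda^2s'\ge s$; so that step is secure. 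The trade-off: your argument is more elementary and self-contained, needing only Thomas's reparametrization-control lemma and the covering lemma rather than the full variational principle, at the cost of redoing the dimension-theoretic packing estimate; the paper's argument is shorter given the variational principle as a black box. The only loose end in your write-up is the Borel measurability of $x\mapsto\mu(B(x,t,\epsilon,\phi))$ needed to make the sets $E_{k,l}$ measurable, but the paper itself assumes this implicitly when integrating $\underline h_\mu(\phi,x)$, so it is not a gap specific to your approach.
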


\begin{theorem}\label{main}
	Let $(X,\phi)$ be a compact metric flow without fixed points and $\mu\in \mathcal{E}_{\phi}(X)$. Let
	\begin{equation}
	G_{\mu}(\phi)=\{ x\in X :\lim_{t\rightarrow \infty}\dfrac{1}{t}\int_{0}^{t}f(\phi_{\tau}x)d\tau =\int_{X} f d\mu, ~~~\forall f \in C(X) \}
	\end{equation}
	be the set of generic points for $\mu$, then $$h_{top}^{B}(\phi, G_{\mu}(\phi))=h_{\mu}(\phi_{1}).$$	
\end{theorem}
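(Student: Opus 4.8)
The plan is to prove the two inequalities separately; the first is immediate from Theorem~\ref{main2}, and the second is the substance of the argument.

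\emph{Step 1: $h_{\mu}(\phi_{1})\le h_{top}^{B}(\phi,G_{\mu}(\phi))$.} Since $\mu$ is ergodic for $\phi$, the continuous--time Birkhoff ergodic theorem, applied first to a countable dense subset of $C(X)$ and then extended to all of $C(X)$ by uniform approximation, gives $\mu(G_{\mu}(\phi))=1$. Applying Theorem~\ref{main2} with $Y=G_{\mu}(\phi)$ then yields $h_{\mu}(\phi_{1})\le h_{top}^{B}(\phi,G_{\mu}(\phi))$ at once, so the lower bound requires nothing new.

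\emph{Step 2: the upper bound on a full-measure part.} Fix $\delta>0$ and $\epsilon>0$. By the Brin--Katok formula for fixed-point-free flows proved in the previous section, applied to the (ergodic) measure $\mu$ so that the almost-everywhere value of the local entropy is the constant $h_{\mu}(\phi_{1})$, for $\mu$-a.e.\ $x$ one has $\limsup_{t\to\infty}-\frac1t\log\mu\bigl(B(x,t,\epsilon/2,\phi)\bigr)\le h_{\mu}(\phi_{1})$. Intersecting this set with $G_{\mu}(\phi)$ gives a Borel set $Z\subseteq G_{\mu}(\phi)$ with $\mu(Z)=1$ and, for each $x\in Z$, a time $T(x)$ with $\mu\bigl(B(x,t,\epsilon/2,\phi)\bigr)\ge e^{-t(h_{\mu}(\phi_{1})+\delta)}$ for all $t\ge T(x)$; put $Z_{N}=\{x\in Z:T(x)\le N\}$, so that $Z=\bigcup_{N}Z_{N}$. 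For fixed $N$ and $t\ge N$, a Vitali-type covering lemma adapted to reparametrization balls --- whose proof must accommodate the fact that a reparametrization composes with another reparametrization while rescaling the time interval, so the usual triangle-inequality arguments need adaptation, and which uses the compactness of $X$ and the uniform continuity of $\phi$ on $X\times[-1,1]$ --- produces a finite pairwise disjoint family $B(x_{i},t,\epsilon/2,\phi)$, $1\le i\le k_{t}$, with $x_{i}\in Z_{N}$, whose $\epsilon$-dilates $B(x_{i},t,\epsilon,\phi)$ cover $Z_{N}$. Disjointness together with the measure lower bound forces $k_{t}\le e^{t(h_{\mu}(\phi_{1})+\delta)}$. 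Hence, for any $s>h_{\mu}(\phi_{1})+\delta$, the weighted sum $\sum_{i\le k_{t}}e^{-st}\le e^{t(h_{\mu}(\phi_{1})+\delta-s)}\to 0$ as $t\to\infty$, so the $s$-dimensional Carath\'eodory outer measure underlying $h_{top}^{B}(\phi,\cdot)$ vanishes on $Z_{N}$. Thus $h_{top}^{B}(\phi,Z_{N})\le h_{\mu}(\phi_{1})+\delta$ for every $N$, and by countable stability of Bowen entropy $h_{top}^{B}(\phi,Z)\le h_{\mu}(\phi_{1})+\delta$; letting $\delta\to0$ (with $\epsilon$ arbitrary) gives $h_{top}^{B}(\phi,Z)\le h_{\mu}(\phi_{1})$.

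\emph{Step 3: from $Z$ to all of $G_{\mu}(\phi)$ --- the main obstacle.} The complement $G_{\mu}(\phi)\setminus Z$ is $\mu$-null, but a $\mu$-null set may carry positive Bowen entropy, so Step~2 does not finish the proof; the genericity must be exploited to control every point. The route I would take is to upgrade the measure lower bound of Step~2 to \emph{every} generic point, i.e.\ to prove $\limsup_{t\to\infty}-\frac1t\log\mu\bigl(B(x,t,\epsilon,\phi)\bigr)\le h_{\mu}(\phi_{1})$ for \emph{all} $x\in G_{\mu}(\phi)$, after which the covering scheme of Step~2 applies verbatim with $Z$ replaced by $G_{\mu}(\phi)$ and $Z_{N}$ replaced by the set of $x\in G_{\mu}(\phi)$ whose orbital empirical measures are $\mu$-typical to within $1/N$ from time $N$ on. To obtain this uniform-over-generic-points estimate I would fix a finite Borel partition of $X$ into pieces of diameter $<\epsilon$ and of negligible $\mu$-boundary, record the reparametrized itinerary of the orbit segment $\{\phi_{\tau}x:0\le\tau\le t\}$ with respect to this partition, and count the $\mu$-typical such itineraries; genericity forces the itinerary of $x$ to be $\mu$-typical for all large $t$, and the count is controlled by $e^{t(h_{\mu}(\phi_{1})+\delta)}$ by combining the Brin--Katok formula in its general non-ergodic form (as established in the previous section and used here for the $\phi$-invariant measures arising in the comparison, which need not be ergodic even though $\mu$ is) with Abramov's relation $h_{\mu}(\phi_{t})=|t|\,h_{\mu}(\phi_{1})$ \cite{Abramov} to pass between the flow and its time-one map, and with the covering lemma for reparametrization balls \cite{Dou}. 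Feeding this into Step~2 and invoking countable stability of Bowen entropy propagates $h_{top}^{B}(\phi,\cdot)\le h_{\mu}(\phi_{1})$ from the strata to $G_{\mu}(\phi)$, which with Step~1 gives the claimed equality. The two places I expect to be delicate are precisely the covering lemma for reparametrization balls (these are not metric balls) and making the itinerary count uniform over all generic points rather than merely $\mu$-almost every point.
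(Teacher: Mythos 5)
Your Step 1 is exactly the paper's lower bound (Corollary \ref{lower} via Theorem \ref{main2}), and Step 2 is a correct application of the Brin--Katok formula plus a Vitali-type covering (essentially Theorem \ref{main3}(1)) to a full-measure subset $Z$ of $G_{\mu}(\phi)$. The gap is in Step 3, and it is fatal as stated: the ``upgrade'' you propose --- that $\limsup_{t\to\infty}-\frac1t\log\mu\bigl(B(x,t,\epsilon,\phi)\bigr)\le h_{\mu}(\phi_{1})$ for \emph{every} $x\in G_{\mu}(\phi)$ --- is false in general. Genericity only constrains the empirical measures of the orbit of $x$, not the $\mu$-measure of the dynamical ball around $x$. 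Already for the time-one discretization one can build a generic point for an ergodic shift-invariant measure by splicing, at positions $k_j$ with sparse relative lengths $\ell_j=o(k_j)$, words whose per-symbol information content $c_j=-\frac{1}{\ell_j}\log\mu([w_j])$ tends to infinity; the empirical measures still converge to $\mu$ (so the point stays generic), while $-\frac1n\log\mu([x]_n)$ can be made to exceed $h_\mu+\delta$ for all large $n$, and even to diverge. So the local lower entropy can be strictly larger than $h_{\mu}(\phi_1)$ (even infinite) at some generic points, and no pointwise measure estimate of the kind you want can close the argument. Your itinerary-counting fallback suffers from the same confusion: ``the itinerary of $x$ is $\mu$-typical'' (an empirical-frequency statement) does not imply ``the itinerary has $\mu$-measure at least $e^{-t(h_\mu+\delta)}$'', and the claimed bound $e^{t(h_\mu+\delta)}$ on the number of typical itineraries cannot be extracted from Brin--Katok plus Abramov --- that combination again only yields almost-everywhere measure estimates.

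What actually works, and what the paper does, is to replace the pointwise measure bound by a \emph{cardinality} bound on separated sets. Set $A_{n,m}=\{x:\frac1n\int_0^n\delta_x\circ\phi_{-\tau}\,d\tau\in K_m\}$ for a shrinking basis $\{K_m\}$ of closed convex neighborhoods of $\mu$; then $G_{\mu}(\phi)\subset\bigcup_{n\ge N}A_{n,m}$ for every $N,m$, which covers \emph{all} generic points, not just a full-measure subset. The key estimate (Claim 2 in the paper) is that the maximal cardinality $S_n(A_{n,m},\epsilon)$ of an $(n,\epsilon)$-strongly separated subset of $A_{n,m}$ grows at rate at most $h_{\mu}(\phi_1)$: one takes the averaged empirical measures $\mu_n$ of a maximal separated set, notes they lie in $K_{m}$ by convexity and hence converge to $\mu$, and runs a Misiurewicz-type entropy computation $\frac{q}{n+1}\log S_n\le H_{\mu_n}(\bigvee_{i=0}^{q-1}\phi_{-i}\beta)+o(1)$ with a partition $\beta$ of $\mu$-null boundary, so that upper semicontinuity of $\nu\mapsto H_{\nu}(\bigvee_{i=0}^{q-1}\phi_{-i}\beta)$ at $\mu$ delivers the bound. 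Covering each $A_{n,m}$ by the $2\epsilon$-reparametrization balls centered at a maximal separated set (Lemma 1.3 of \cite{Thoma1}) then gives $\mathcal{M}^{s}_{N,2\epsilon}(\phi,G_{\mu}(\phi))\le\sum_{n\ge N}e^{-\delta n}\to0$ for $s=h_{\mu}(\phi_1)+2\delta$. You would need to replace your Step 3 by this counting argument; as written, your proof establishes the lower bound and the upper bound only on a full-measure subset of $G_{\mu}(\phi)$.
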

It is worthy mentioning that since the Birkhoff's Ergodic Theorem for flows (see \cite{Kry}), $\mu(G_{\mu}(\phi))=1$ if $\mu \in \mathcal{E}_{\phi}(X)$. $G_{\mu}(\phi)$  may be an empty  set when  $\mu$ is not ergodic.
Then we give  a lower bound for $h_{top}^{B}(\phi, G_{\mu}(\phi))$ by Theorem 
\ref{main2}. We also note that  for the proof of  Theorem \ref{main2}, we use a non-ergodic version of  Brin-Katok's entropy formula (Theorem \ref{ka}) and a variational principle for Bowen entropy \cite{Dou}.  For the upper bound, we apply the ideas of Pfister and Sullivan \cite{Pfi} to prove Theorem \ref{main}. The idea of proof is inspired by Zheng and Chen \cite{Zheng}.
Moreover, we show that the Bowen entropy can be determined by  the local entropies of measures. This result can be considered as an analogue of Billingsley's Theorem for the Hausdorff dimension \cite{Bi}. The key to proof of Theorem \ref{main3} is that we need to  overcome technical difficulties arising from allowing reparametrizations of orbits.
\begin{theorem}\label{main3}
	Let $(X,\phi)$ be a compact metric flow without fixed points. For any $\mu \in  \mathcal{E}_{\phi}(X)$, $E$ be a Borel subset of $X$ and  
	$0<s<\infty$.
	\begin{enumerate}
		\item If $\underline{h}_{\mu}(x)\leq s $ for all $x\in E$, then $h^B_{top}(\phi, E)\leq s$.
		\item If $\underline{h}_{\mu}(x)\geq s$ for all $x\in E$ and $\mu (E)>0$, then $h_{top}^B(\phi, E)\geq s$.
	\end{enumerate}
\end{theorem}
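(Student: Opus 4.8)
\noindent\textit{Proof strategy for Theorem \ref{main3}.}
I would first recall the dimensional set-up behind $h_{top}^{B}(\phi,\cdot)$ from \cite{Dou}: for $Z\subseteq X$, $s\ge 0$, $N\in\mathbb N$, $\epsilon>0$ let $\mathcal{M}(Z,s,N,\epsilon)=\inf\sum_{i}e^{-st_{i}}$, the infimum over all countable covers of $Z$ by reparametrization balls $B(x_{i},t_{i},\epsilon,\phi)$ with $t_{i}\ge N$; then $\mathcal{M}(Z,s,\epsilon)=\lim_{N\to\infty}\mathcal{M}(Z,s,N,\epsilon)$ has a critical exponent $h_{top}^{B}(\phi,Z,\epsilon)$, and $h_{top}^{B}(\phi,Z)=\lim_{\epsilon\to0}h_{top}^{B}(\phi,Z,\epsilon)$. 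Two monotonicities will be used throughout: $h_{top}^{B}(\phi,\cdot,\epsilon)$ is monotone in the set and non-increasing in $\epsilon$, so $h_{top}^{B}(\phi,Z)=\sup_{\epsilon>0}h_{top}^{B}(\phi,Z,\epsilon)$. Also note $\underline{h}_{\mu}(x)=\underline{h}_{\mu}(\phi,x)=\sup_{\epsilon>0}\liminf_{t\to\infty}-\tfrac1t\log\mu(B(x,t,\epsilon,\phi))$, so the hypotheses ``$\underline{h}_{\mu}(x)\le s$'' (resp.\ ``$\ge s$'') give, at \emph{every} fixed scale $\epsilon$, a liminf $\le s$ (resp., along a cofinal sequence of scales $\tfrac1m$, a liminf $>s-\delta$). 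Statements (1) and (2) are then the Billingsley and Frostman halves of the comparison between $h_{top}^{B}(\phi,\cdot)$ and the local $\mu$-entropy, and I would run the classical arguments with one essential adaptation.

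The adaptation, which I expect to be the main obstacle, is to replace the two standard geometric facts about Bowen balls by their reparametrization-ball analogues: (i) a $5r$-type selection lemma --- from any family $\{B(x_{i},t_{i},\epsilon,\phi)\}_{i\in I}$ one can extract a countable pairwise disjoint subfamily $\{B(x_{j},t_{j},\epsilon,\phi)\}_{j\in J}$ with $\bigcup_{i\in I}B(x_{i},t_{i},\epsilon,\phi)\subseteq\bigcup_{j\in J}B(x_{j},t_{j},c\epsilon,\phi)$ for a constant $c$ depending only on the flow; and (ii) an off-center comparison --- if $y\in B(x,t,\epsilon,\phi)$ then $B(x,t,\epsilon,\phi)\subseteq B(y,t',c\epsilon,\phi)$ with $t'$ within a bounded factor of $t$. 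For Bowen balls these are elementary, but an admissible reparametrization may expand or contract the time scale without bound, so comparing nested reparametrization balls requires the absence of fixed points. Concretely I would use the compactness estimate that there are $\epsilon_{0}>0$ and $L>0$ with $\sup_{0\le s\le L}d(\phi_{a+s}x,\phi_{a}x)\ge\epsilon_{0}$ for all $x\in X$, $a\in\mathbb R$ (otherwise a limit of the offending orbit segments would be a fixed point); this bounds the ``speed'' of orbits from below and hence the time-distortion of any reparametrization keeping two orbit segments $\epsilon$-close, and makes (i)--(ii) go through, by estimates in the spirit of \cite{Dou,YJ}.

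Granting (i)--(ii), I would argue (1) as follows. Fix $\delta>0$ and $\epsilon>0$; for each $x\in E$ choose, using $\underline{h}_{\mu}(x)\le s$, a time $t_{x}\ge N$ with $\mu(B(x,t_{x},\epsilon,\phi))>e^{-t_{x}(s+\delta)}$. The balls $\{B(x,t_{x},\epsilon,\phi)\}_{x\in E}$ cover $E$; by (i) pass to a pairwise disjoint subfamily $\{B(x_{i},t_{x_{i}},\epsilon,\phi)\}_{i}$ whose $c\epsilon$-enlargements still cover $E$, and then
\[
\mathcal{M}(E,s+2\delta,N,c\epsilon)\ \le\ \sum_{i}e^{-t_{x_{i}}(s+2\delta)}\ \le\ e^{-N\delta}\sum_{i}\mu\!\left(B(x_{i},t_{x_{i}},\epsilon,\phi)\right)\ \le\ e^{-N\delta},
\]
the last step by disjointness. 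Sending $N\to\infty$ gives $\mathcal{M}(E,s+2\delta,c\epsilon)=0$, hence $h_{top}^{B}(\phi,E,c\epsilon)\le s+2\delta$; then $\epsilon\to0$ and $\delta\to0$ yield $h_{top}^{B}(\phi,E)\le s$.

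For (2), fix $\delta\in(0,s)$ and set $E_{m}=\{x\in E:\liminf_{t\to\infty}-\tfrac1t\log\mu(B(x,t,\tfrac1m,\phi))>s-\delta\}$. Since $\underline{h}_{\mu}(x)\ge s>s-\delta$ on $E$, we have $E=\bigcup_{m}E_{m}$, so $\mu(E_{m_{0}})>0$ for some $m_{0}$; put $\epsilon=1/m_{0}$ and, writing $E_{m_{0}}=\bigcup_{N}E_{m_{0},N}$ with $E_{m_{0},N}=\{x\in E_{m_{0}}:\mu(B(x,t,\epsilon,\phi))\le e^{-t(s-\delta)}\ \forall\,t\ge N\}$, fix $N$ with $\mu(E_{m_{0},N})>0$. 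For any cover of $E_{m_{0},N}$ by balls $B(x_{i},t_{i},\epsilon,\phi)$ with $t_{i}\ge N$ I would discard those missing $E_{m_{0},N}$ and, via (ii) (shrinking $\epsilon$ beforehand if the definition of $\mathcal M$ allows arbitrary centers), reduce to the case $x_{i}\in E_{m_{0},N}$; then $\sum_{i}e^{-t_{i}(s-\delta)}\ge\sum_{i}\mu(B(x_{i},t_{i},\epsilon,\phi))\ge\mu(E_{m_{0},N})>0$, so $\mathcal{M}(E_{m_{0},N},s-\delta,\epsilon)>0$ and $h_{top}^{B}(\phi,E_{m_{0},N},\epsilon)\ge s-\delta$. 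By the two monotonicities, $h_{top}^{B}(\phi,E)\ge h_{top}^{B}(\phi,E_{m_{0},N})\ge h_{top}^{B}(\phi,E_{m_{0},N},\epsilon)\ge s-\delta$, and letting $\delta\to0$ gives $h_{top}^{B}(\phi,E)\ge s$. The only genuinely flow-specific work is (i)--(ii); everything else is bookkeeping with the Carathéodory construction.
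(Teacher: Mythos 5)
Your proposal is correct and follows essentially the same route as the paper: both halves are the Ma--Wen Billingsley-type argument, and your key adaptations (i) and (ii) are precisely the $5r$-covering lemma and the off-center inclusion for reparametrization balls that the paper imports from Dou--Fan--Qiu (Lemmas 6.1 and 6.2), themselves resting on Thomas's time-distortion estimate for fixed-point free flows. The only bookkeeping you elide is that these lemmas shrink the time parameter to $(1-\eta)t$ or $(1-\eta)^2t$, so the exponent in part (1) must be taken as $(s+\delta)/(1-\eta)^2$ (and $(1-\eta)(s-\delta)$ in part (2)) before letting $\eta\to 0$, exactly as the paper does.
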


The remainder of this paper is organized as follows. In Section \ref{two}, we introduce  Bowen entropy for flows and present basic concepts  concerning the measure-theoretic entropy for time one map. In Section \ref{three}, we prove the Brin-Katok's entropy formula for non-ergodic case. The proofs of Theroem \ref{main2}, Theorem \ref{main}, Theorem \ref{main3}  are given in  Section \ref{four}, Section \ref{five}, Section \ref{six}, respectively.
\section{Preliminaries}\label{two}
\subsection{Bowen entropy for compact metric flows }

In this subsection, we first introduce Bowen entropy for compact  metric flows  \cite{Dou}.
 Let $(X,\phi)$ be a flow and $Z\subset X$. For $s\geq0$, $N\in\mathbb{N}$ and $\epsilon>0$, set $$\mathcal{M}_{N,\epsilon}^s(\phi,Z)=\inf\sum_{i}\exp(-st_i),$$ where the infimum is taken over all finite or countable families of reparametrization balls $\{B(x_i,t_i,\epsilon,\phi) \}$, $x_i\in X$ and $t_i\geq N$ such that $Z\subset\cup B(x_i,t_i,\epsilon,\phi)$. Then the following limits exist: $$\mathcal{M}_{\epsilon}^s(\phi,Z)=\lim_{N\to\infty}\mathcal{M}_{N,\epsilon}^s(\phi,Z),\ \mathcal{M}^s(\phi,Z)=\lim_{\epsilon\to0}\mathcal{M}_{\epsilon}^s(\phi,Z).$$
The Bowen entropy $h_{top}^B(\phi,Z)$ is defined as critical value of the parameters, where $\mathcal{M}^s(\phi,Z)$ jumps from $\infty$ to $0$, i.e.,
\begin{align*}
	h_{top}^B(\phi,Z)&=\inf\{s:\mathcal{M}^s(\phi,Z)=0 \}\\
	&=\sup\{s:\mathcal{M}^s(\phi,Z)=\infty \}.
\end{align*}
\begin{prop}\label{w} \rm\cite{Dou}
$ $	
\begin{enumerate}[1.]
\item If $Z_{1}\subset Z_{2}\subset X$, then $h_{top}^B(\phi,Z_{1})\leq h_{top}^B(\phi,Z_{2})$.
\item If $Z_{i}\subset X$ for $i=1,2,...,$ then $h_{top}^B(\phi,\bigcup_{i=1}^{\infty}Z_{i})= \sup\limits_{i\geq 1}h_{top}^B(\phi,Z_{i})$.
\end{enumerate}
\end{prop}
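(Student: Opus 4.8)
The plan is to unwind directly the Carathéodory-type construction that defines $h_{top}^B$; both statements are then elementary, and the only thing requiring care is the bookkeeping of the three successive limits. The key observation, used throughout, is the behaviour of the premeasures $\mathcal{M}_{N,\epsilon}^s(\phi,\cdot)$ under shrinking the family of admissible covers: if $Z\subset X$, $N'\ge N$ and $0<\epsilon'\le\epsilon$, then every countable family of reparametrization balls $\{B(x_i,t_i,\epsilon',\phi)\}$ with $t_i\ge N'$ covering $Z$ can be replaced, keeping centres and times, by $\{B(x_i,t_i,\epsilon,\phi)\}$, which still covers $Z$, has times $\ge N$, and the same total weight $\sum_i\exp(-st_i)$. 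Hence $\mathcal{M}_{N,\epsilon}^s(\phi,Z)\le\mathcal{M}_{N',\epsilon'}^s(\phi,Z)$; in particular $\mathcal{M}_{N,\epsilon}^s$ is non-decreasing in $N$ and non-increasing in $\epsilon$, so $\mathcal{M}_\epsilon^s(\phi,Z)=\sup_N\mathcal{M}_{N,\epsilon}^s(\phi,Z)$ and $\mathcal{M}^s(\phi,Z)=\sup_{\epsilon>0}\mathcal{M}_\epsilon^s(\phi,Z)$.

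For item 1, if $Z_1\subset Z_2$ then every admissible cover of $Z_2$ is an admissible cover of $Z_1$, so $\mathcal{M}_{N,\epsilon}^s(\phi,Z_1)\le\mathcal{M}_{N,\epsilon}^s(\phi,Z_2)$ for all $N,\epsilon,s$; passing to the limits gives $\mathcal{M}^s(\phi,Z_1)\le\mathcal{M}^s(\phi,Z_2)$, whence $\{s:\mathcal{M}^s(\phi,Z_2)=0\}\subset\{s:\mathcal{M}^s(\phi,Z_1)=0\}$ and therefore $h_{top}^B(\phi,Z_1)\le h_{top}^B(\phi,Z_2)$.

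For item 2, write $Z=\bigcup_{i\ge1}Z_i$ and $s_0=\sup_{i\ge1}h_{top}^B(\phi,Z_i)$. The inequality $s_0\le h_{top}^B(\phi,Z)$ is immediate from item 1. For the reverse inequality we may assume $s_0<\infty$; fix $s>s_0$ and $\epsilon>0$. For each $i$ we have $h_{top}^B(\phi,Z_i)<s$, and since $\{s':\mathcal{M}^{s'}(\phi,Z_i)=0\}$ is an up-set (the map $s'\mapsto\mathcal{M}^{s'}(\phi,Z_i)$ being non-increasing, as the excerpt records via the ``jump from $\infty$ to $0$''), this gives $\mathcal{M}^s(\phi,Z_i)=0$; by the suprema recorded above this forces $\mathcal{M}_{N,\epsilon}^s(\phi,Z_i)=0$ for every $N$. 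Now fix $N\in\mathbb{N}$ and $\eta>0$: for each $i$ pick an admissible family $\{B(x^{(i)}_j,t^{(i)}_j,\epsilon,\phi)\}_j$ covering $Z_i$ with $t^{(i)}_j\ge N$ and $\sum_j\exp(-st^{(i)}_j)<\eta\,2^{-i}$. Their union over $i$ is a countable family of reparametrization balls with all times $\ge N$, it covers $Z$, and its total weight is $<\sum_{i\ge1}\eta\,2^{-i}=\eta$; hence $\mathcal{M}_{N,\epsilon}^s(\phi,Z)\le\eta$. Letting $\eta\to0$, then $N\to\infty$, then $\epsilon\to0$ yields $\mathcal{M}^s(\phi,Z)=0$, so $h_{top}^B(\phi,Z)\le s$; since $s>s_0$ is arbitrary, $h_{top}^B(\phi,Z)\le s_0$, which together with the easy direction gives equality.

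I do not expect a genuine obstacle here: this is the standard argument for outer-measure-type set functions. The one point that must be handled with care is the order of limits — specifically, upgrading $\mathcal{M}^s(\phi,Z_i)=0$ to $\mathcal{M}_{N,\epsilon}^s(\phi,Z_i)=0$ for each individual pair $(N,\epsilon)$, which is exactly where the recorded monotonicities in $N$ and $\epsilon$ are used, since it is this per-scale vanishing (not merely the vanishing of the limit) that the $\eta\,2^{-i}$ splitting in item 2 consumes.
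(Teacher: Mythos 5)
Your proof is correct: the monotonicity of the premeasures $\mathcal{M}_{N,\epsilon}^s$ in $N$, $\epsilon$ and $s$, the resulting sup-representation that upgrades $\mathcal{M}^s(\phi,Z_i)=0$ to per-scale vanishing, and the $\eta\,2^{-i}$ splitting are exactly the standard Carath\'eodory-construction argument. The paper itself states this proposition without proof, citing \cite{Dou}, and your argument is essentially the one given there, so there is nothing further to reconcile.
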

\subsection{Measure-theoretic entropy for time one map }

In this subsection, we recall some notations and result on the  measure-theoretic entropy for time one map $\phi_{1}$. 
Note that $\phi_{1} : X \rightarrow X$ is a continuous map on the compact metric space $X$ with metric $d$. Let $ \mathcal{M}_{\phi_{1}}(X), \mathcal{E}_{\phi_{1}}(X)$ denote the sets of all $\phi_{1}$-invariant Borel probability  measures and $\phi_{1}$-invariant 
ergodic  Borel probability measures, respectively. Let $\mathcal{B}(X)$ be  the Borel $\sigma$-algebra of $X$. A partition of $X$ is a disjoint collection of elements of $\mathcal{B}(X)$ whose union is $X$. Let $\mathcal{P}(X)$ denote the collection of all finite measurable partitions of $X$. Given two partitions $\alpha, \beta$ of $X$, $\alpha$ is said to  be finer than $\beta$ (denoted by $\alpha \succeq \beta$) if each element of $\alpha$ is contained in some element of $\beta$. Let $\alpha \vee \beta= \left\lbrace A\bigcap B: A\in \alpha, B\in \beta \right\rbrace .$

 For $\xi\in \mathcal{P}(X)$  and $x\in X$, denote  by $\xi(x)$ the element of $\xi$ containing $x$, and set $\xi_{n}=\xi\vee  \phi_{1}^{-1} \xi \vee \cdots \vee \phi_{1}^{-(n-1)}\xi $.
 The each $\phi_{1}$-invariant measure $\mu$ induces a measure preserving dynamical system $(X, \mathcal{B}(X), \mu, \phi_{1} )$. Consider  the $\sigma$-algebra $\mathcal{T}_{\mu}=\{A\in \mathcal{B}(X): \mu(A\bigtriangleup \phi_{1}^{-1} A)=0\}$.
Let $\rho: X \rightarrow X/\mathcal{T}_{u}:= Y$ be the associated projection and $\mu=\int_{Y} \mu_{y} d \pi(y)$ be the  decomposition of $\mu$ over $Y$. Such a decomposition  is called the \emph{ergodic decomposition} of $\mu$, since for each $y\in Y$, $\rho^{-1}(y)$ is $\phi_{1}$-invariant and $(\rho^{-1}(y), \phi_{1}, \mu_{y})$ is $\phi_{1}$-ergodic measurable dynamical system.  
The following  is the non-ergodic version of Shannon-McMillan-Breiman theorem \cite{WP}.
\begin{theorem}\rm\cite{WP}
	Let $(X,\mathcal{B}({X}), \mu, \phi_{1})$ be a  measure preserving  dynamical system. Then for any   $\xi\in  \mathcal{P}(X)$ one has that for $\mu$-a.e.$x\in X$, 
	$$ \lim\limits_{n\rightarrow \infty}-\dfrac{1}{n}\log \mu(\xi_{n}(x))=h_{\mu_{y}}(T, \xi|\rho^{-1}(y))\triangleq h(x, \xi),$$
	where $y\in Y$ such that $\rho^{-1}(y)$ is  the ergodic component containing $x$ and
	$$ \int_{X} h(x, \xi) d \mu(x)= h_{\mu}(\phi_{1}, \xi).$$
\end{theorem}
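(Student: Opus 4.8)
The plan is to reduce the statement to the classical ergodic Shannon--McMillan--Breiman theorem applied to the ergodic components $\mu_y$ of $\mu$, to glue these together by a comparison estimate between $\mu$ and the measures $\mu_y$, and then to upgrade the resulting one-sided inequality to an equality by a soft argument based on Fatou's lemma together with the ergodic decomposition of measure-theoretic entropy (affinity of the entropy map).

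\textbf{Existence of the limit.} Set $\eta_m=\bigvee_{j=1}^m\phi_1^{-j}\xi$ (with $\eta_0$ the trivial partition), let $g_m=I_\xi(\,\cdot\mid\eta_m)$ be the conditional information function and $g_0=I_\xi$. Since $\xi_n(x)=\xi(x)\cap\phi_1^{-1}\bigl(\xi_{n-1}(\phi_1 x)\bigr)$ and $\phi_1^{-1}\xi_{n-1}=\eta_{n-1}$, the chain rule for information iterates to the telescoping identity
$$-\log\mu(\xi_n(x))=\sum_{k=0}^{n-1}g_{\,n-1-k}(\phi_1^k x).$$
By the martingale convergence theorem applied to $\mathbb{E}_\mu[1_A\mid\eta_m]$ for each of the finitely many $A\in\xi$, one has $g_m\to g_\infty:=I_\xi\bigl(\,\cdot\mid\bigvee_{j\ge1}\phi_1^{-j}\xi\bigr)$ $\mu$-a.e.\ and in $L^1(\mu)$, and the maximal function $g^*:=\sup_{m\ge0}g_m$ lies in $L^1(\mu)$ since $\mu\{g^*>t\}\le(\#\xi)\,e^{-t}$ by Doob's maximal inequality applied atom by atom. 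Feeding this into Birkhoff's ergodic theorem in the standard Breiman manner --- splitting the sum above at a fixed index $N$, bounding the first and last blocks of length $N$ by $g^*$ and the middle block by $\sup_{m\ge N}|g_m-g_\infty|$, letting $n\to\infty$ and then $N\to\infty$ via dominated convergence for $\mathbb{E}_\mu[\,\cdot\mid\mathcal{T}_\mu]$ --- shows that
$$g(x):=\lim_{n\to\infty}-\tfrac1n\log\mu(\xi_n(x))$$
exists for $\mu$-a.e.\ $x$ and is a $\phi_1$-invariant function. Moreover, since $-\tfrac1n\log\mu(\xi_n(\cdot))\ge 0$, Fatou's lemma gives $\int_X g\,d\mu\le\liminf_{n}\tfrac1n H_\mu(\xi_n)=h_\mu(\phi_1,\xi)$.

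\textbf{Comparison with the components.} Using $\mu_{\rho(x)}(A)=\mathbb{E}_\mu[1_A\mid\mathcal{T}_\mu](x)$ for the disintegration $\mu=\int_Y\mu_y\,d\pi(y)$ and summing over the atoms $A$ of $\xi_n$,
$$\int_X\frac{\mu(\xi_n(x))}{\mu_{\rho(x)}(\xi_n(x))}\,d\mu(x)=\sum_{A\in\xi_n}\mu(A)\int_X\frac{1_A(x)}{\mathbb{E}_\mu[1_A\mid\mathcal{T}_\mu](x)}\,d\mu(x)\le\sum_{A\in\xi_n}\mu(A)=1,$$
so by Markov's inequality and the Borel--Cantelli lemma $\limsup_{n}\tfrac1n\log\frac{\mu(\xi_n(x))}{\mu_{\rho(x)}(\xi_n(x))}\le 0$ for $\mu$-a.e.\ $x$. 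On the other hand, for $\pi$-a.e.\ $y$ the system $(\rho^{-1}(y),\mu_y,\phi_1)$ is ergodic, so the classical ergodic Shannon--McMillan--Breiman theorem gives $-\tfrac1n\log\mu_y(\xi_n(x))\to h_{\mu_y}(\phi_1,\xi\mid\rho^{-1}(y))=h(x,\xi)$ for $\mu_y$-a.e.\ $x$, hence for $\mu$-a.e.\ $x$ with $y=\rho(x)$. Combining the last two facts yields $g(x)\ge h(x,\xi)$ for $\mu$-a.e.\ $x$.

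\textbf{Conclusion.} By the affinity of $\nu\mapsto h_\nu(\phi_1,\xi)$ and the ergodic decomposition of entropy (Jacobs' theorem), $\int_X h(x,\xi)\,d\mu(x)=\int_Y h_{\mu_y}(\phi_1,\xi)\,d\pi(y)=h_\mu(\phi_1,\xi)$. Chaining this with the two previous steps,
$$h_\mu(\phi_1,\xi)=\int_X h(x,\xi)\,d\mu(x)\le\int_X g(x)\,d\mu(x)\le h_\mu(\phi_1,\xi),$$
so $\int_X\bigl(g(x)-h(x,\xi)\bigr)\,d\mu(x)=0$ with a non-negative integrand, whence $g(x)=h(x,\xi)$ for $\mu$-a.e.\ $x$; this is the first assertion, and the displayed integral formula is precisely the Jacobs identity just used. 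The main obstacle is the first step: carrying out the Breiman passage to the limit rigorously in the non-ergodic case, where Birkhoff's theorem produces a genuinely non-constant invariant limit and one must simultaneously control the reversed-index sum, the integrable maximal function $g^*$, and the decay of $\mathbb{E}_\mu[\sup_{m\ge N}|g_m-g_\infty|\mid\mathcal{T}_\mu]$ as $N\to\infty$.
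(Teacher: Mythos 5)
The paper does not actually prove this statement: it is quoted verbatim from Parry's book \cite{WP} and used as a black box, so there is no in-paper argument to compare against. Judged on its own, your proof is correct in outline and follows the standard two-stage route: (a) the Breiman machinery (chain rule for information, the telescoping identity $-\log\mu(\xi_n(x))=\sum_{k=0}^{n-1}g_{n-1-k}(\phi_1^kx)$, martingale convergence of $g_m=I_\xi(\cdot\mid\eta_m)$, integrability of the maximal function, and Birkhoff with the block-splitting estimate) to get a.e.\ existence of an invariant limit $g$; (b) identification of $g$ with the fiber entropy $h_{\mu_{\rho(x)}}(\phi_1,\xi)$. Your step (b) is a nice, comparatively soft variant: the inequality $\int \mu(\xi_n(x))/\mu_{\rho(x)}(\xi_n(x))\,d\mu\le 1$ plus Markov and Borel--Cantelli gives $g\ge h(\cdot,\xi)$ a.e., and the squeeze $h_\mu(\phi_1,\xi)=\int h(\cdot,\xi)\,d\mu\le\int g\,d\mu\le h_\mu(\phi_1,\xi)$ (Fatou on one side) forces equality; the textbook route instead identifies the limit directly as $\mathbb{E}_\mu\bigl[I_\xi(\cdot\mid\bigvee_{j\ge1}\phi_1^{-j}\xi)\,\big|\,\mathcal{T}_\mu\bigr]$ and then relates that conditional information to the fiber measures, which is more delicate. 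Two caveats you should make explicit. First, the squeeze needs the partition-level ergodic decomposition $h_\mu(\phi_1,\xi)=\int_Y h_{\mu_y}(\phi_1,\xi)\,d\pi(y)$; affinity of $\nu\mapsto h_\nu(\phi_1,\xi)$ for finite convex combinations does not by itself yield this integral version, so either cite it precisely (it is standard) or derive it, e.g.\ from $h_\mu(\phi_1,\xi)=h_\mu(\phi_1,\xi\mid\mathcal{T}_\mu)$, which holds because the factor generated by $\mathcal{T}_\mu$ has zero entropy, together with $h_\mu(\phi_1,\xi\mid\mathcal{T}_\mu)=\int_Y h_{\mu_y}(\phi_1,\xi)\,d\pi(y)$. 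Second, note that this identity is exactly the second assertion of the theorem, so in your scheme that assertion is quoted rather than proved --- acceptable here, since the paper itself quotes the whole theorem, but worth stating so the logic does not look circular. With those points spelled out, and the Breiman limit argument written in full (you correctly flag it as the main technical burden), the proposal is a complete proof.
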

\section{Brin-Katok's entropy formula for  non-ergodic case}\label{three}

In this section, we will prove Brin-Katok's entropy formula  for  compact metric flows. The statement of this formula is the following.
\begin{theorem}\label{ka}
	Let $(X, \phi)$ be a compact metric flow without fixed points. For any $\mu \in \mathcal {M}_{\phi}(X)$, then 
$$\underline{h}_{\mu}(\phi)=\overline{h}_{\mu}(\phi)=h_{\mu}(\phi_{1})$$	
	and for $\mu$-a.e. $x\in X,$  $ \overline h_{\mu}(\phi , x)= \underline h_{\mu}(\phi, x)$.
\end{theorem}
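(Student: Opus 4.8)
The plan is to transplant the classical Brin--Katok argument to the reparametrized setting, pushing all of the non-ergodicity into the quantity $h(x,\xi)$ supplied by the non-ergodic Shannon--McMillan--Breiman theorem quoted above, and then to average. Fix the ergodic decomposition $\mu=\int_Y\mu_y\,d\pi(y)$ with projection $\rho\colon X\to Y$, and recall that for every $\xi\in\mathcal P(X)$ and $\mu$-a.e.\ $x$ one has $-\tfrac1n\log\mu(\xi_n(x))\to h(x,\xi)=h_{\mu_{\rho(x)}}(\phi_1,\xi\mid\rho^{-1}(\rho(x)))$, with $\int h(x,\xi)\,d\mu=h_\mu(\phi_1,\xi)$. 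I would prove the two pointwise bounds $\overline h_\mu(\phi,x)\le h_{\mu_{\rho(x)}}(\phi_1)$ and $\underline h_\mu(\phi,x)\ge h_{\mu_{\rho(x)}}(\phi_1)$ for $\mu$-a.e.\ $x$; since $\underline h_\mu(\phi,x)\le\overline h_\mu(\phi,x)$ always, this forces equality of all three quantities $\mu$-a.e., and the integrated statement then follows.

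For the upper bound, given $\epsilon>0$ choose, by uniform continuity of $(z,u)\mapsto\phi_uz$ on $X\times[0,1]$, a $\delta(\epsilon)>0$ with $d(z,z')<\delta(\epsilon)\Rightarrow d(\phi_uz,\phi_uz')<\epsilon$ for all $u\in[0,1]$. Then for any $\xi\in\mathcal P(X)$ with $\mathrm{diam}\,\xi<\delta(\epsilon)$, comparing $\xi$-itineraries at integer times and taking the reparametrization equal to the identity gives $\xi_{\lceil t\rceil}(x)\subset B(x,\lceil t\rceil,\epsilon,\phi)\subset B(x,t,\epsilon,\phi)$, so $-\tfrac1t\log\mu(B(x,t,\epsilon,\phi))\le\tfrac{\lceil t\rceil}{t}\bigl(-\tfrac1{\lceil t\rceil}\log\mu(\xi_{\lceil t\rceil}(x))\bigr)$. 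Letting $t\to\infty$ and applying the non-ergodic SMB theorem yields $\limsup_{t\to\infty}-\tfrac1t\log\mu(B(x,t,\epsilon,\phi))\le h(x,\xi)\le h_{\mu_{\rho(x)}}(\phi_1)$ for $\mu$-a.e.\ $x$; as $\epsilon$ was arbitrary, $\overline h_\mu(\phi,x)\le h_{\mu_{\rho(x)}}(\phi_1)$. This half uses nothing about fixed points.

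The lower bound is the main obstacle: one now needs an \emph{upper} bound on $\mu(B(x,t,\epsilon,\phi))$, so the continuum of admissible reparametrizations must be controlled. Fix a partition $\xi=\{P_1,\dots,P_k\}$ with $\mu(\partial P_i)=0$, choose compacta $K_i\subset P_i$ with $\mu(X\setminus\bigcup_iK_i)$ small, put $\xi'=\{X\setminus\bigcup_iK_i,K_1,\dots,K_k\}$, and take $\epsilon$ below a quarter of $\min_{i\ne j}d(K_i,K_j)$. If $y\in B(x,t,\epsilon,\phi)$ via some $\alpha$, then $d(\phi_{\alpha(j)}x,\phi_1^{\,j}y)<\epsilon$ at integers $j\le\lceil t\rceil-1$, and the separation of the $K_i$'s pins down, up to boundedly many choices, the $\xi$-cell of $\phi_1^{\,j}y$ in terms of the reparametrized itinerary $j\mapsto\xi'(\phi_{\alpha(j)}x)$ of $x$. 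Hence $B(x,t,\epsilon,\phi)$ is covered by those $\xi_{\lceil t\rceil}$-cells compatible with such an itinerary, and the number needed is $e^{o(t)}$, the subexponential growth coming from the fact that a fixed-point-free flow on a compact space has a uniform positive lower bound on orbit speed, so that only subexponentially many reparametrizations are $\epsilon$-distinguishable along the orbit --- precisely the kind of estimate available from \cite{YJ}. Feeding this covering into the non-ergodic SMB bound for $\mu(\xi_{\lceil t\rceil}(x))$, and using a Markov-inequality argument to pass from ``$\mu(X\setminus\bigcup_iK_i)$ small'' to ``$\mu_{\rho(x)}(X\setminus\bigcup_iK_i)$ small off a small-measure exceptional set'' (so that the orbit of $x$ visits $X\setminus\bigcup_iK_i$ with small frequency under its own ergodic component), one obtains $\liminf_{t\to\infty}-\tfrac1t\log\mu(B(x,t,\epsilon,\phi))\ge h(x,\xi)$ once $\epsilon$ is small relative to $\xi$. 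Taking the supremum over a refining sequence of such partitions gives $\underline h_\mu(\phi,x)\ge h_{\mu_{\rho(x)}}(\phi_1)$ for $\mu$-a.e.\ $x$.

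Combining the two bounds, $\underline h_\mu(\phi,x)=\overline h_\mu(\phi,x)=h_{\mu_{\rho(x)}}(\phi_1)$ for $\mu$-a.e.\ $x$. Integrating, $\underline h_\mu(\phi)=\overline h_\mu(\phi)=\int_Y h_{\mu_y}(\phi_1)\,d\pi(y)$; taking a refining sequence $\xi^{(1)}\preceq\xi^{(2)}\preceq\cdots$ of finite partitions generating $\mathcal B(X)$ modulo $\mu$, the identities $\int h(x,\xi^{(m)})\,d\mu=h_\mu(\phi_1,\xi^{(m)})$, the Kolmogorov--Sinai theorem, and monotone convergence give $\int_Y h_{\mu_y}(\phi_1)\,d\pi(y)=\lim_m h_\mu(\phi_1,\xi^{(m)})=h_\mu(\phi_1)$, as required. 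I expect the reparametrization-counting step and the associated covering estimate to be the delicate point; the remainder is a routine adaptation of the $\mathbb Z$-action argument once the non-ergodic SMB theorem is in hand.
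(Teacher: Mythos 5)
Your overall architecture (non-ergodic SMB theorem plus ergodic decomposition, upper bound via the inclusion $\xi_n(x)\subset B(x,t,\epsilon,\phi)$, lower bound via counting reparametrized itineraries and Borel--Cantelli) matches the paper's, and your upper-bound half is correct and even slightly sharper than Proposition \ref{m1} (pointwise rather than integrated). But the counting claim at the heart of your lower bound is wrong as stated: it is \emph{not} true that only $e^{o(t)}$ reparametrizations are $\epsilon$-distinguishable along an orbit segment of length $t$. What the fixed-point-free hypothesis gives (via Lemma \ref{imp}) is that, after discretizing time into $n$ blocks of length $L\tau$, the integer sequence $\lfloor(\alpha(kL\tau)-kL\tau)/(\theta/4)\rfloor$ determines $\alpha$ up to an error $\theta$, and since consecutive terms differ by at most $1$ there are at most $3^{n-1}$ such sequences; combined with the two-fold ambiguity coming from the junk piece of the partition this yields at most $6^{n}$ cells of $\bigvee_{i=0}^{n-1}\phi_{L\tau}^{-i}\xi$ meeting $B(x,nL\tau,\epsilon,\phi)$. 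That count is exponential in $t=nL\tau$ with rate $2\log 6/(L\tau)$, and the argument closes only because one first fixes a tolerance $p>0$ and then chooses $L\geq(2\log 6+p)/(p\tau)$ so that this rate is dominated by $p$. Your ``subexponential'' shortcut would make the block length $L$ unnecessary, and it is false; the large-$L$ dilution is the actual mechanism, both here and in \cite{YJ}.

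The second missing ingredient is specific to the non-ergodic case, which is the whole point of this theorem. The Borel--Cantelli step requires a \emph{uniform} lower bound on $-\frac1n\log\mu(\xi_n(x))$ over the set on which you count atoms, whereas $h(x,\xi)$ is only constant on ergodic components and can be unbounded. So one must stratify $X$ into the level sets $B_k=\rho^{-1}(h^{-1}[k\gamma,(k+1)\gamma))$ of the fibre entropy together with the set where it is infinite, apply Egorov on each stratum to obtain sets $E_k$ on which the SMB convergence is uniform, and only then bound the number of atoms whose measure exceeds the threshold $e^{-(k\gamma-2b)n}$ --- this is precisely Claim 1 in the paper. Your Markov-inequality remark addresses a different issue (the visit frequency to the complement of the compact pieces, which the paper instead absorbs into the factor $2^n$) and does not supply this stratification; calling the rest a ``routine adaptation'' hides exactly the step that separates this theorem from its ergodic predecessor. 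With these two repairs your plan coincides with the paper's proof.
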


 Theorem \ref{ka} can be obtained form the following Proposition \ref{m1}
and \ref{m2}. 
\begin{prop}\label{m1}
Let $(X, \phi)$ be a compact metric flow. For any $\mu \in \mathcal{M}_{\phi }(X)$, then 
		$$ \int _{X} \overline{h}_{\mu}(\phi, x) d \mu \leq \frac{1}{|\tau |}h_{\mu}(\phi_\tau), $$
		for all $\tau \in \mathbb R\setminus \{0\}.$
\end{prop}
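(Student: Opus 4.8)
The plan is to run the ``easy half'' of the Brin--Katok argument: I bound each reparametrization ball $B(x,t,\epsilon,\phi)$ from inside by an atom of a dynamical refinement $\bigvee_{k=0}^{n-1}\phi_\tau^{-k}\xi$ of a sufficiently fine finite partition $\xi$, and then feed this into the non-ergodic Shannon--McMillan--Breiman theorem stated above, applied to the measure preserving transformation $\phi_\tau$ (for which $\mu$ is invariant). First, since $\phi_\tau=(\phi_{-\tau})^{-1}$ and a map and its inverse have the same measure-theoretic entropy, $\tfrac1{|\tau|}h_\mu(\phi_\tau)=\tfrac1{|-\tau|}h_\mu(\phi_{-\tau})$, so I may and do assume $\tau>0$.

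Fix $\epsilon>0$. Because $\phi$ is continuous and $X\times[0,\tau]$ is compact, $\phi$ is uniformly continuous on $X\times[0,\tau]$; choose $\delta\in(0,\epsilon)$ so that $d(z,w)<\delta$ forces $d(\phi_sz,\phi_sw)<\epsilon$ for all $s\in[0,\tau]$, and then a finite Borel partition $\xi$ of $X$ with $\mathrm{diam}(\xi)<\delta$. Set $\xi^\tau_n=\bigvee_{k=0}^{n-1}\phi_\tau^{-k}\xi$. I claim that for every $x\in X$, every $t>0$ and $n=n(t):=\lceil t/\tau\rceil+1$ one has $\xi^\tau_{n(t)}(x)\subset B(x,t,\epsilon,\phi)$: if $y\in\xi^\tau_{n}(x)$ then $d(\phi_{k\tau}x,\phi_{k\tau}y)<\delta$ for $0\le k\le n-1$, and for $s\in[0,(n-1)\tau]$, writing $s=k\tau+r$ with $0\le k\le n-2$ and $r\in[0,\tau]$, the choice of $\delta$ yields $d(\phi_sx,\phi_sy)=d(\phi_r\phi_{k\tau}x,\phi_r\phi_{k\tau}y)<\epsilon$; since $(n-1)\tau\ge t$ this holds for all $s\in[0,t]$, so $y\in B(x,t,\epsilon,\phi)$ (take the reparametrization $\alpha=\mathrm{id}\in Rep[0,t]$). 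Hence $\mu(\xi^\tau_{n(t)}(x))\le\mu(B(x,t,\epsilon,\phi))$ and therefore
$$-\frac1t\log\mu\big(B(x,t,\epsilon,\phi)\big)\le\frac{n(t)}{t}\left(-\frac1{n(t)}\log\mu\big(\xi^\tau_{n(t)}(x)\big)\right).$$
Since $n(t)/t\to 1/\tau$ as $t\to\infty$ and, by the non-ergodic Shannon--McMillan--Breiman theorem applied to $\phi_\tau$ and $\xi$, the limit $h^\tau(x,\xi):=\lim_{n\to\infty}-\tfrac1n\log\mu(\xi^\tau_n(x))$ exists for $\mu$-a.e.\ $x$ with $\int_X h^\tau(x,\xi)\,d\mu(x)=h_\mu(\phi_\tau,\xi)$, I get for $\mu$-a.e.\ $x$
$$\limsup_{t\to\infty}\Big(-\frac1t\log\mu\big(B(x,t,\epsilon,\phi)\big)\Big)\le\frac1\tau\,h^\tau(x,\xi).$$
Denoting the left-hand side by $g_\epsilon(x)$, the function $g_\epsilon$ is nonnegative (as $\mu(B)\le1$) and measurable (the sets $\{(x,y):y\in B(x,t,\epsilon,\phi)\}$ are open, so $x\mapsto\mu(B(x,t,\epsilon,\phi))$ is lower semicontinuous), so integrating gives $\int_X g_\epsilon\,d\mu\le\tfrac1\tau h_\mu(\phi_\tau,\xi)\le\tfrac1\tau h_\mu(\phi_\tau)$.

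To conclude, let $\epsilon$ decrease to $0$ along a sequence $\epsilon_m\downarrow0$. Since $\epsilon\mapsto\mu(B(x,t,\epsilon,\phi))$ is nondecreasing, $g_\epsilon$ is nonincreasing in $\epsilon$, hence $g_{\epsilon_m}(x)\uparrow\overline h_\mu(\phi,x)$ for every $x$, and the monotone convergence theorem yields
$$\int_X\overline h_\mu(\phi,x)\,d\mu(x)=\lim_{m\to\infty}\int_X g_{\epsilon_m}(x)\,d\mu(x)\le\frac1\tau\,h_\mu(\phi_\tau)=\frac1{|\tau|}\,h_\mu(\phi_\tau).$$
The one genuinely delicate point is the inclusion $\xi^\tau_{n(t)}(x)\subset B(x,t,\epsilon,\phi)$ --- that is, promoting $d$-closeness of the $\phi_\tau$-orbits at the sample times $0,\tau,\dots,(n-1)\tau$ to $\epsilon$-closeness of the full flow orbits on all of $[0,t]$; this is precisely what uniform continuity of $\phi$ on $X\times[0,\tau]$ provides. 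Note that the fixed-point-free hypothesis is not needed for this upper bound; it will only be used for the matching lower bound, Proposition~\ref{m2}.
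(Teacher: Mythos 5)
Your proof is correct and follows essentially the same route as the paper's: both trap an atom of $\bigvee_{k=0}^{n-1}\phi_\tau^{-k}\xi$ inside the reparametrization ball via uniform continuity of $\phi$ on $X\times[0,\tau]$, invoke the non-ergodic Shannon--McMillan--Breiman theorem for $\phi_\tau$, and reduce $\tau<0$ to $\tau>0$ using $h_\mu(\phi_{-\tau})=h_\mu(\phi_\tau)$. Your handling of the continuous-time limit via $n(t)=\lceil t/\tau\rceil+1$ and the explicit measurability/monotone-convergence remarks are just more careful bookkeeping of the same argument.
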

\begin{proof}	
{\bf Case 1.} Consider $\tau >0.$

\noindent Notice that 
$$\limsup\limits_{n\to \infty } -\dfrac{\log \mu ( B(x, n\tau, \epsilon, \phi ))}{n\tau }= \limsup\limits_{t\to \infty } -\dfrac{\log \mu ( B(x,t , \epsilon, \phi ))}{t}.$$
 Indeed, for $t>0$,  choose  $n_t\in \mathbb N$ such that $n_t\tau \leq t< (n_t +1)\tau. $
 Then we have
 $$B(x, (n_t+1)\tau , \epsilon, \phi )\subset B(x, t, \epsilon, \phi )\subset B(x, n_t \tau , \epsilon, \phi ).$$
 Therefore, 
\begin{align*}
\limsup\limits_{t\to \infty } -\dfrac{\log \mu ( B(x,t, \varepsilon, \phi ))}{t }
&\leq  \limsup\limits_{t\to \infty } -\dfrac{\log \mu ( B(x,(n_t  +1)\tau , \varepsilon, \phi ))}{t }\\
&\leq \limsup\limits_{t\to \infty } -\dfrac{\log \mu ( B(x,(n_t  +1)\tau , \varepsilon, \phi ))}{n_t\tau }\\
&=\limsup\limits_{t\to \infty } -\dfrac{\log \mu ( B(x,(n_t  +1)\tau , \varepsilon, \phi ))}{(n_t +1)\tau }.
\end{align*}
Then it is enough to prove the result for $t=n\tau, ~n \in \mathbb{N}.$

For any $\epsilon >0$, choose    $\eta >0$  such that $d(\phi_sx, \phi_s y)<\epsilon $, $\forall s\in[0,\tau] $ if $d(x,y)<\eta$.
		For any $x\in X$, we define
		$${D}(x, n, \eta , \phi_{\tau } )=\{y \in X: d(\phi_{i\tau } x, \phi_{i \tau } y)<\eta,  i=0, 1, \cdots, n-1 \},$$
		$$B_{t}(x,\epsilon, \phi )=\{y\in X: d(\phi_sx,\phi_sy)<\epsilon, ~ \forall~ 0\leq s\leq t \}.$$
Then
		$${D}(x, n, \eta , \phi_{\tau } )\subset B_{t}(x,\epsilon, \phi )\subset B(x, t, \epsilon, \phi ).$$
	For a finite measurable partition  $\beta$, let ${\rm diam}(\beta)$= $\max\left\lbrace {{\rm diam}(A): A\in \beta}\right\rbrace .$	Choose a finite measurable partition $\xi $ of $X$ with ${\rm diam }(\xi )<\frac{\eta }{2}$.   Then by  SMB theorem, for $\mu$-a.e. $x \in X,$
		$$  \int_{X} \lim\limits_{n\to \infty } -\dfrac{\log \mu (\xi_n(x))}{n} d\mu =h_{\mu }(\phi_{\tau} , \xi  )\leq h_{\mu }(\phi_{\tau } ),$$
		where $\xi_n=\xi\vee  \phi_{\tau }^{-1} \xi \vee \cdots \vee \phi_{\tau}^{-(n-1)}\xi$ and $\xi_{n}(x)$ be the element of $\xi_{n}$ containing $x$.
		Since $\xi_n(x)\subset {D}(x, n, \eta , \phi_{\tau } ) \subset B(x, n\tau, \epsilon, \phi ),$
		$$\int_{X} \lim\limits_{\epsilon \to 0}\limsup\limits_{n\to \infty } -\dfrac{\log \mu ( B(x, n\tau, \epsilon, \phi ))}{n } d\mu \leq h_{\mu}(\phi_{\tau}).$$
	It follows that 
		$$ \int_{X}\overline{h}_{\mu}(\phi, x) d\mu \leq \frac{1}{ \tau  }h_{\mu}(\phi_\tau).$$

\noindent{\bf Case 2.} Consider $\tau <0.$
		
		Then we have $-\tau >0,$ by {\bf Case 1}, we obtain 
		$$ \int_{X}\overline{h}_{\mu}(\phi, x)d\mu \leq -\frac{1}{\tau } h_{\mu }(\phi_{-\tau }) =-\frac{1}{\tau } h_{\mu }(\phi_{\tau})=\frac{1}{|\tau |}h_{\mu }(\phi_{\tau }). $$

\end{proof}

\begin{lemma}\label{imp}\rm\cite{Thoma1}	Let $(X, \phi)$ be a compact metric flow without fixed points. For any $\eta >0,$ there exists $\theta >0$ such that for any $x, y\in X$, and any closed interval $I$ containing the orign, and any reparametrization $\alpha \in Rep(I), $ if $d(\phi_{\alpha(s)}(x), \phi_s(y))< \theta $ for all $s\in I$, then it holds that
	$$|\alpha(s)-s|<\left\{
	\begin{array}{ll}
	\eta  |s|, & \hbox{if $|s|>1$;} \\
	\eta, & \hbox{if $|s|\leq 1$.}
	\end{array}
	\right.$$
\end{lemma}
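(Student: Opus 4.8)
The plan is to prove the lemma by contradiction, after first isolating two elementary consequences of the absence of fixed points. Write $\omega(t)=\sup_{z\in X}d(\phi_t z,z)$ and $\Omega(\theta)=\sup\{d(\phi_r p,\phi_r q):p,q\in X,\ d(p,q)\le\theta,\ |r|\le 1\}$; by uniform continuity of $(t,z)\mapsto\phi_t z$ on $[-1,1]\times X$ one has $\omega(t)\to 0$ as $t\to 0$ and $\Omega(\theta)\to 0$ as $\theta\to 0$. First I would record that there is $p_{\min}\in(0,+\infty]$ such that every periodic point of $\phi$ has minimal period at least $p_{\min}$: a sequence of periodic points $z_n$ with minimal periods $p_n\to 0$ would have whole orbits contained in balls of radius $\omega(p_n)\to 0$ about $z_n$, so a subsequential limit $z$ would satisfy $\phi_t z=z$ for all $t\in[-1,1]$, hence for all $t$, contradicting fixed-point freeness. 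Consequently, for every $\gamma$ with $0<\gamma<p_{\min}$ one has $c(\gamma):=\inf_{u\in X}d(\phi_\gamma u,u)>0$, since otherwise a subsequential limit of near-returners would be a periodic point of minimal period $\le\gamma<p_{\min}$ (or a fixed point).

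Now fix $\eta>0$, set $\gamma:=\tfrac12\min(\eta,p_{\min})\in(0,p_{\min})$, and choose $\theta>0$ so small that $\theta+\Omega(\theta)<c(\gamma)$; I claim this $\theta$ works. Suppose not: there are $x,y\in X$, a closed interval $I\ni 0$, a reparametrization $\alpha\in Rep(I)$ with $d(\phi_{\alpha(s)}x,\phi_s y)<\theta$ for all $s\in I$, and a point $s^*\in I$ at which the asserted inequality fails, i.e. $|\alpha(s^*)-s^*|\ge\eta\max(|s^*|,1)$. By symmetry (replace $s$ by $-s$ throughout and $\phi$ by the time-reversed flow) I may assume $s^*>0$, and I treat the case $\alpha(s^*)>s^*$ in detail, the case $\alpha(s^*)<s^*$ being the mirror image. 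Put $g(s)=\alpha(s)-s$ on $[0,s^*]\subseteq I$; then $g$ is continuous, $g(0)=0$, and $g(s^*)\ge\eta\max(s^*,1)>0$. Define $T(c)=\inf\{s\in[0,s^*]:g(s)\ge c\}$ for $c\in[0,g(s^*)]$; by continuity of $g$, the function $T$ is non-decreasing, takes values in $[0,s^*]$, and satisfies $\alpha(T(c))=T(c)+c$.

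The crux is a counting step producing a short interval on which $\alpha$ carries a definite ``excess''. With $c_k=k\gamma$ for $k=0,1,\dots,K:=\lfloor g(s^*)/\gamma\rfloor$ — where $K\ge\max(s^*,1)\ge1$ because $g(s^*)\ge 2\gamma\max(s^*,1)$ — the non-negative increments $T(c_{k+1})-T(c_k)$ sum to $T(c_K)\le s^*$, so some increment is at most $s^*/K\le1$. Fix such a $k$ and write $s=T(c_k)$, $s'=T(c_{k+1})$, $\ell=s'-s$; then $0<\ell\le1$ and $\alpha(s')=\alpha(s)+\ell+\gamma$. Put $v=\phi_{\alpha(s)}x$, $w=\phi_s y$ (so $d(v,w)<\theta$) and $u=\phi_\ell v$. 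On one hand $d(u,\phi_{s'}y)=d(\phi_\ell v,\phi_\ell w)\le\Omega(\theta)$, using $\ell\le1$ and $\phi_\ell w=\phi_{s'}y$; on the other hand $\phi_\gamma u=\phi_{\alpha(s)+\ell+\gamma}x=\phi_{\alpha(s')}x$, which is within $\theta$ of $\phi_{s'}y$ by hypothesis. Hence $d(\phi_\gamma u,u)<\theta+\Omega(\theta)<c(\gamma)=\inf_{z\in X}d(\phi_\gamma z,z)$, a contradiction.

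The step I expect to be the main obstacle is the conceptual one underlying the counting argument: recurrence rules out any naive estimate of the form ``$\phi_a v$ close to $v$ forces $a$ small'', so the discrepancy $\alpha(s^*)-s^*$, which may be huge, cannot be attacked directly. The continuity of $\alpha$ together with the auxiliary function $T$ and the pigeonhole over the levels $c_k$ is precisely what converts it into a discrepancy of the fixed, controllable size $\gamma$ over an interval of length at most $1$, where the uniform separation estimate $c(\gamma)>0$ applies. The remaining verifications — that $[0,s^*]\subseteq I$, that $g(T(c))=c$ by continuity, that $\alpha(s')=\alpha(s)+\ell+\gamma$, and the bookkeeping $K\ge\max(s^*,1)$ — are routine, and the mirror-image case uses $T(c)=\inf\{s\in[0,s^*]:g(s)\le c\}$ with $c_k=-k\gamma$.
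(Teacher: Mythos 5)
Your proof is correct, and it is self-contained, which is more than the paper offers: Lemma \ref{imp} is only quoted there from Thomas \cite{Thoma1}, with no proof reproduced. Your two preparatory facts are exactly the right use of compactness and fixed-point freeness (minimal periods are bounded away from $0$, hence $c(\gamma)=\inf_u d(\phi_\gamma u,u)>0$ for $\gamma$ below that bound), and the contradiction step is sound: from $g(T(c))=c$, monotonicity of $T$, and the pigeonhole over the levels $k\gamma$ you extract $s<s'$ in $[0,s^*]\subseteq I$ with $s'-s\le 1$ and $\alpha(s')-\alpha(s)=(s'-s)+\gamma$, and then $u=\phi_{s'-s}\phi_{\alpha(s)}x$ satisfies $d(\phi_\gamma u,u)<\theta+\Omega(\theta)<c(\gamma)$, which is impossible; the bookkeeping $K\ge\max(s^*,1)$ from $\gamma\le\eta/2$ checks out, the time-reversal reduction to $s^*>0$ is legitimate since $c(\gamma)$ and $\Omega$ are invariant under $t\mapsto -t$, and the mirror case works as you indicate with $u=\phi_{\alpha(s')}x$. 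Compared with Thomas's original argument, which rests on the same uniform displacement bound for fixed-point free flows but compares $\alpha(s)$ with $s$ by marching along the orbit in steps of bounded length, your device of the level-crossing times $T(c)$ converts a possibly huge discrepancy $\alpha(s^*)-s^*$ directly into a discrepancy of fixed size $\gamma$ over a time window of length at most $1$; this is arguably cleaner, and it also shows the conclusion needs only continuity of $\alpha$ with $\alpha(0)=0$, not that $\alpha$ is a homeomorphism onto its image. The only cosmetic slip is that the orbits of the short-period points $z_n$ lie in balls of radius $\sup_{0\le t\le p_n}\omega(t)$ rather than $\omega(p_n)$, which does not affect the limit argument.
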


\begin{prop}\label{m2}
		Let $(X, \phi)$ be a compact metric flow without fixed points. For any $\mu \in \mathcal{M}_{\phi }(X)$, then
		$$ \int_{X}\underline{h}_{\mu}(\phi, x)d\mu \geq \frac{1}{|\tau |}h_{\mu}(\phi_\tau), $$
		for all $\tau \in \mathbb R\setminus \{0\}.$
\end{prop}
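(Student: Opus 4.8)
# Proof Proposal for Proposition \ref{m2}

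The plan is to establish the reverse inequality $\int_X \underline{h}_\mu(\phi,x)\,d\mu \geq \frac{1}{|\tau|}h_\mu(\phi_\tau)$ by bounding reparametrization balls \emph{from inside} Bowen balls for the map $\phi_\tau$, using Lemma \ref{imp} to control the time-distortion coming from the reparametrizations. As in Proposition \ref{m1}, I would first reduce to $\tau > 0$ and to times $t = n\tau$ with $n \in \mathbb{N}$ (the Case 2 reduction via $\phi_{-\tau}$ is identical, and the sub/super-sequence sandwiching argument carries over verbatim with $\liminf$ in place of $\limsup$). So it suffices to show
\begin{equation*}
\int_X \lim_{\epsilon\to 0}\liminf_{n\to\infty} -\frac{\log\mu(B(x,n\tau,\epsilon,\phi))}{n\tau}\,d\mu \geq \frac{1}{\tau} h_\mu(\phi_\tau).
\end{equation*}

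The key step is to produce, for a suitably small $\epsilon$, an inclusion of the form $B(x, n\tau, \epsilon, \phi) \subset \bigcup_{j} D(z_j, Cn, \delta, \phi_\tau)$ for a controlled number of centers, or more directly to dominate $\mu(B(x,n\tau,\epsilon,\phi))$ by $\mu$ of a Bowen ball for $\phi_\tau$ of comparable length. Fix a finite partition $\xi$ with small diameter and apply the non-ergodic SMB theorem to $\phi_\tau$: for $\mu$-a.e.\ $x$, $-\frac{1}{n}\log\mu(\xi_n(x)) \to h(x,\xi)$ with $\int h(x,\xi)\,d\mu = h_\mu(\phi_\tau,\xi)$, and $\sup_\xi h_\mu(\phi_\tau,\xi) = h_\mu(\phi_\tau)$. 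The task is then to show that each reparametrization ball $B(x,n\tau,\epsilon,\phi)$ can be covered by at most $e^{o(n)}$ (or at most $\lambda^n$ with $\lambda$ small, then let $\epsilon\to 0$) atoms of $\xi_{\lfloor (1+\rho)n\rfloor}$ for the map $\phi_\tau$, where $\rho = \rho(\epsilon)\to 0$; combined with the standard Brin--Katok counting/Borel--Cantelli machinery this yields the bound. Here is where Lemma \ref{imp} enters: given $\eta$ determined by $\mathrm{diam}(\xi)$, choose $\theta$ from the lemma; then $\epsilon < \theta$ forces any witnessing reparametrization $\alpha \in Rep[0,n\tau]$ to satisfy $|\alpha(s) - s| < \eta|s|$ for $|s|>1$, so that $\phi$-orbit times along the reparametrized orbit of $x$ stay within a factor $1+\eta$ of the true times; hence a point $y$ in the reparametrization ball is shadowed (in the $\phi_\tau$ sense, up to $\mathrm{diam}(\xi)$) by $x$ over roughly $(1+\eta)n$ steps of $\phi_\tau$, possibly after adjusting by a bounded initial segment.

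A cleaner route I would actually try first: show directly that $B(x,n\tau,\epsilon,\phi)$ is contained in a \emph{union over a small time-window} of genuine $\phi_\tau$-Bowen balls centered at points $\phi_r x$ for $r$ ranging over a bounded set, each of length $\geq (1-\eta)n$; since $\mu$ is $\phi$-invariant, $\mu(D(\phi_r x, m, \delta,\phi_\tau))$ relates to $\mu(\xi_m(\cdot))$ after refining $\xi$, and the number of windows needed grows subexponentially in $n$. Taking $-\frac{1}{n\tau}\log$, using that the window count contributes $o(1)$, applying Fatou's lemma (the integrands are bounded below by $0$ and we have a lim inf), and finally letting $\mathrm{diam}(\xi)\to 0$ so that $h_\mu(\phi_\tau,\xi)\uparrow h_\mu(\phi_\tau)$, gives the claimed inequality.

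The main obstacle is the uniformity of the reparametrization control: Lemma \ref{imp} gives $|\alpha(s)-s| < \eta|s|$ only for $|s|>1$ and $|\alpha(s)-s|<\eta$ for $|s|\leq 1$, so the distortion is multiplicative at large times but the small-time behavior and the passage between "$\phi$-shadowing along $\alpha$" and "$\phi_\tau$-shadowing at integer multiples of $\tau$" must be handled carefully — in particular, a point $y$ in the reparametrization ball is compared to $\phi_{\alpha(i\tau)}x$, not $\phi_{i\tau}x$, and $\alpha(i\tau)$ need not be an integer multiple of $\tau$, so one must absorb the fractional part into the partition diameter (enlarging $\eta$, hence choosing $\mathrm{diam}(\xi)$ and $\theta$ accordingly) and control how the index range $i = 0,\dots,n-1$ maps under $\alpha$ onto a range of $\phi_\tau$-iterates. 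Making the "number of windows is $e^{o(n)}$" claim rigorous — i.e.\ that finitely many (independent of $n$, or at worst polynomially many) shifted Bowen balls suffice — is the crux, and I expect it to follow from a compactness argument using the no-fixed-points hypothesis (which guarantees orbits move at bounded-below speed on compact sets, so a bounded time-shift covers the reparametrization slack).
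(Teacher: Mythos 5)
Your overall architecture (reduce to $\tau>0$ and integer sample times, apply the non-ergodic SMB theorem to a fine partition, count how many atoms a reparametrization ball can meet, then run a Borel--Cantelli argument) matches the paper's, and you have correctly located the crux: bounding the number of atoms of $\bigvee_{i=0}^{n-1}\phi_{\tau}^{-i}\xi$ that meet $B(x,n\tau,\epsilon,\phi)$. But your proposed resolution of that crux does not work, for two reasons. First, the ``bounded time-window'' idea fails outright: for $y$ in the reparametrization ball with witness $\alpha$, Lemma \ref{imp} only gives $|\alpha(s)-s|<\eta|s|$, so the drift at time $i\tau$ can be as large as $\eta n\tau$ --- it grows linearly in $n$, is not bounded by any compactness argument, and moreover varies with $i$, so no single shifted center $\phi_{r}x$ (nor boundedly many such centers) can absorb it. Second, the atom count is \emph{not} $e^{o(n)}$, and the base of the exponential cannot be made small by shrinking $\epsilon$: what must be enumerated is the entire drift \emph{path} $i\mapsto\alpha(i\tau)-i\tau$, discretized at a scale $\theta/4$ tied to the partition, and since consecutive increments of this path are bounded one gets on the order of $3^{n}$ possible paths, while each fixed path still allows about $2^{n}$ itineraries from boundary effects of the partition. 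The count is genuinely $C^{n}$ for a fixed constant ($6^{n}$ in the paper), independent of $\epsilon$.

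The paper's way out is a device absent from your proposal: it samples the orbit at times $kL\tau$ rather than $k\tau$, with $L$ chosen so large that the unavoidable factor $6^{n}$ contributes only $\tfrac{2\log 6}{L\tau}\le p$ per unit time, and then lets $p\to 0$ at the very end. It also works with the ergodic decomposition of $\mu$ with respect to $\phi_{L\tau}$, stratifies $X$ by the entropy $h(y)$ of the ergodic components, and invokes Egorov's theorem to make the SMB convergence uniform on sets of large measure --- Fatou alone does not suffice, because the Borel--Cantelli step needs a uniform lower bound on $-\tfrac{1}{n}\log\mu(\xi_{n}(x))$ over a set of controlled measure, not just a.e.\ convergence. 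Without the $L$-spacing trick (or a substitute for it), your argument would at best yield $\int_{X}\underline{h}_{\mu}(\phi,x)\,d\mu\ge\tfrac{1}{\tau}h_{\mu}(\phi_{\tau})-\tfrac{2\log 6}{\tau}$, which is not the claim.
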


\begin{proof}
Fix $\tau>0$, without loss of generality  we may assume that $h_{\mu}(\phi_{\tau})>0$. For any $p>0$, we will show
$$p+ \int_{X}\underline{h}_{\mu}(\phi, x) d\mu \geq \frac{1}{\tau }h_{\mu}(\phi_\tau).$$
We choose $L\in \mathbb N$ such that $L\geq\dfrac{2\log6+p}{p\tau}$.
		We divide the proof into the following two cases.

\noindent{\bf Case 1.} Consider $\tau >0.$ \\
  Similar to the above argument, we have  
  $$\liminf\limits_{t\to \infty } -\dfrac{\log \mu ( B(x,t, \epsilon, \phi ))}{t }=\liminf\limits_{n\to \infty } -\dfrac{\log \mu ( B(x,nL\tau , \varepsilon, \phi ))}{nL \tau }.$$

\noindent Then it is sufficient to prove the case of   $t=nL\tau , n\in \mathbb N.$

   Consider the $\sigma$-algebra $\mathcal{A}_{\mu}=\{A\in \mathcal{B}(X): \mu(A\bigtriangleup \phi_{-L \tau}A)=0\}$.
 Let $\rho: X \rightarrow X/\mathcal{A}_{u}:= Y$ be the associated projection and $\mu=\int_{Y} \mu_{y} d \pi(y)$ be the $\phi_{L\tau}$-ergodic decomposition of $\mu$.  For each  $y\in Y$, $\rho^{-1}(y)$ is $\phi_{L\tau}$-invariant and $(\rho^{-1}(y),\phi_{L\tau},\mu_y)$ is a ergodic dynamical system. Let $h(y)=h_{\mu_{y}}(\rho^{-1}(y), \phi_{L\tau})$ be the measure theoretic entropy  restricted to the system $(\rho^{-1}(y), \phi_{L\tau}, \mu_{y})$. For any $M >0$, set
$X_{M}=\rho^{-1}(h^{-1}([0, M)))$, $ X_{M}^{'}=\rho^{-1}(h^{-1}([M, \infty)))$ and $X_{\infty}=\rho^{-1}(h^{-1}(\infty)).$  Then $ X= X_{M} \bigcup X_{M}^{'}\bigcup X_{\infty}.$
Therefore, we can have the following relationship which plays a crucial role in our proof.

\noindent{\bf Claim 1.}\label{m}

(1) For any $M>0$,
\begin{align}\label{1}
\int_{X_M}  p+ \lim\limits_{\epsilon \rightarrow 0}\liminf\limits_{n\rightarrow \infty} -\frac{1}{nL\tau}\log \mu(B(x,nL\tau,\epsilon,\phi))d\mu \geq \frac{\int_{h^{-1}([0, M))} h(y) d \pi(y)}{L\tau}.
\end{align}

(2) For $\mu$ almost every $x\in X_{\infty}$,
\begin{align}\label{2}
\lim\limits_{\epsilon \rightarrow 0}\liminf\limits_{n\rightarrow \infty} -\frac{1}{nL\tau}\log \mu(B(x,nL\tau,\epsilon,\phi)) = \infty.
 \end{align}

\noindent The  proof of Claim 1    will be given later.
 Now we proceed with the proof of Proposition{ \ref{m2}}.

According to Claim 1, let 
 $M$ tend to $\infty$, then $\mu(X_{M}\bigcup X_{\infty})$ tend to 1. Hence 
\begin{align*}
\int_{X} p+\underline{h}_{\mu}(\phi, x) d\mu \geq \frac{\int_{Y}h_{\mu_{y}}(\phi_{L \tau},\rho^{-1}(y))d\pi(y)}{L\tau} 
=\frac{h_{\mu}(\phi_{L\tau})}{L\tau}=\frac{h_{\mu}(\phi_{\tau})}{\tau}.
\end{align*}
 As $p>0$ was chosen arbitrary, we conclude that 
\begin{align*}
\int_{X} \underline{h}_{\mu}(\phi, x) d\mu \geq \frac{h_{\mu}(\phi_{\tau})}{\tau}.
\end{align*}


\noindent{\bf Case 2.} Consider $\tau <0.$
Then we have $-\tau >0,$ by {\bf Case 1} we have
$$  \int_{X} \underline{h}_{{\mu}}(\phi, x)d \mu \geq -\frac{1}{\tau } h_{\mu }(\phi_{-\tau })=-\frac{1}{\tau } h_{\mu }(\phi_{\tau})=\frac{1}{|\tau |}h_{\mu }(\phi_{\tau }). $$


\noindent This completes the proof of Proposition \ref{m2}, modulo the Claim 1.

\end{proof}

\begin{proof}[Proof of Claim 1]
	If $\mu(X_{M})= 0$ and $\mu(X_{\infty})=0$, then (\ref{1}) and (\ref{2}) hold respectively.  So we may assume that both $\mu(X_{M})$ and $\mu(X_{\infty})$ are positive.

	Take $K\in \mathbb{N}$ to be sufficiently large and let $\gamma = \frac{M}{K}.$ For $k= 0, 1, \cdot \cdot \cdot, K-1$, let $B_{k}= \rho^{-1}(h^{-1}[k\gamma, (k+1)\gamma))$ and  $B_{\infty}= X_{\infty}.$

	Let $\xi^{m}=\{ A_{1}^{m}, A_{2}^{m}, \cdot \cdot \cdot , A_{c_{m}}^{m} \}$ be a sequence of finite measurable partitions of $X$  such that for each $m$
	\begin{enumerate}[1.]
		\item $A_1^{m}, \cdots, A_{c_{m}-1}^{m}$ are piecewise disjoint compact sets;
		\item $A_{c_{m}} =X\setminus \bigcup\limits_{i=1}^{c_{m}-1} A_i$;
		\item $\lim\limits_{m\rightarrow \infty}\rm{diam}(\xi^{m})=0.$
	\end{enumerate}
	Then
	$$ \lim\limits_{m\rightarrow \infty} h_{\nu}(\phi_{L\tau},\xi^{m})= h_{\nu}( X, \phi_{L \tau}), ~~\text {for}~\text{any}~~ \nu \in \mathcal{M}_{\phi_{L\tau}}(X).$$
	The element of  $\bigvee_{i=0}^{n-1} \phi_{L\tau}^{-i} \xi^{m}$ which contains $x$ will be denoted by $\xi_{n}^{m}$.
	By the SMB theorem, for $\mu$-a.e. $x\in X$,
	$$ \lim\limits_{n\rightarrow \infty} -\frac{1}{n}\log \mu (\xi_{n}^{m}(x))\triangleq h(x, \xi^{m})= h_{\mu_{y}}(\phi_{L\tau},\xi^{m}| \rho^{-1}(y))= h_{\mu_{y}}(\phi_{L \tau}, \xi^{m}), $$
	where $\rho^{-1}(y)$ is the ergodic component that contain $x$, i.e., $\rho(x)=y$.  Hence for $\mu$-a.e. $x\in X, \lim\limits_{m\rightarrow \infty} h(x, \xi^{m})= h_{\mu_{y}}( \rho^{-1}(y),\phi_{L\tau})= h(y),$ where $y=\rho(x).$
	
	For any $0< b < \min\{\gamma, p\}$, by Egorov's theorem, we then can choose $\xi=\xi^{m}$ for $m$ sufficiently large such that up to a subset of $X$ with small $\mu$ measure (say, less than $b$), it holds that $h(x, \xi)> \min\{ \frac{1}{b}, h(\rho(x))-b\}.$ Hence there exists sufficiently large $N$, whence $n > N$, 
	$\mu(E_{k})> \mu(B_{k})-2b$ for each $k= 0, 1, \cdot \cdot \cdot, K-1,$
	and
	$\mu(E_{\infty})> \mu(B_{\infty})- 2b,$
	where 
	$$E_{k}=\{x\in B_{k}:\forall n^{'}\geq n, -\frac{1}{n}\log \mu(\xi_{n} (x))> k\gamma- 2b  \}$$
	and 
	$$E_{\infty}=\{x\in B_{\infty}: \forall n^{'}\geq n, -\frac{1}{n}\log \mu(\xi_{n}(x))> \frac{1}{b}-2b\}$$
Suppose that  $\xi=\{ A_{1}, A_{2},\cdot\cdot\cdot, A_{q}, A_{q+1}\}$. Let $\eta_0=\min\{ d(A_{i}, A_{j}): 1\leq i\neq j\leq q\}$. Fix  $\eta \in (0, \eta_0)$, we choose $\theta >0$
	such that
	\begin{align*}
	d(\phi_s(z), z)<\frac{\eta}{3}
	\end{align*}
	for all $z\in X, |s|\leq \theta.$ By Lemma \ref{imp},  for $\eta=\dfrac{\theta }{ 4L\tau }$
	, we choose $\epsilon \in (0, \dfrac{\eta }{3})$.
	For any $x\in X$, we define
	$$W_n:=\left \{ A\in \bigvee_{i=0}^{n-1} \phi_{L\tau }^{-i} \xi:~ A\cap B(x, t, \epsilon, \phi ) \neq \emptyset \right \}.$$
	Next we will estimate the numbers of $ W_{n}$.
	Notice that
	\begin{align*}
	B(x, t, \epsilon, \phi )\subset \bigcup_{A\in W_n} A.
	\end{align*}
	For $y \in A\cap B(x, t, \epsilon, \phi), A\in \bigvee_{i=0}^{n-1} \phi^{-i}_{L\tau} \xi $, there exists $\alpha \in Rep([0, t])$, such that
	$$d(\phi_{\alpha(s)}x , \phi_s y)<\epsilon, ~0\leq s\leq t. $$ 
	 Fix $s_1\in[0,t]$, set $u:=s-s_1, \gamma(u):=\alpha(s)-\alpha(s_1)$, then $\gamma \in Rep([-s_1, t-s_1]),$ satisfying
	$$d(\phi_{\gamma (u)}\phi_{\alpha(s_1) }x, \phi_u \phi_{s_1} y)<\epsilon, ~-s_1\leq u\leq t-s_1.$$
	By Lemma \ref{imp}, one obtains $$|\gamma(u)-u|<\left\{
	\begin{array}{ll}
	\varepsilon_1  |u|=\dfrac{\theta}{4L\tau}|u|, & \hbox{if $|u|>1$;} \\
	\varepsilon_1=\dfrac{\theta}{4L\tau}<\dfrac{\theta}{4}, & \hbox{if $|u|\leq 1$.}
	\end{array}\right.$$
	For $s_2\in[-s_1,t-s_1]$ satisfying $|s_1-s_2|<L\tau$,  we have
	$$|(\alpha (s_1)-s_1)-(\alpha(s_2)- s_2)|\leq \frac{\theta }{4}.$$
    We consider the following sequence:
	$$S_\alpha= \left\{  \lfloor \dfrac{\alpha(kL\tau )-kL\tau }{\theta / 4} \rfloor  \right\}, ~k=0, 1, \cdots, n-1,$$
	where $\lfloor z\rfloor$ denotes the largest integer  less or equal $z$.
	If for some $ \widetilde{A}\in W_{n}$ with $ \widetilde{A}\neq A$, there exists $z\in  \widetilde{A}\cap B(x, t, \epsilon, \phi )$, we can choose $\beta \in Rep[0, t]$ such that $d(\phi_{\beta(s)}x , \phi_s z)<\epsilon, ~0\leq s\leq t.$ Similarly, we can define the sequence $S_{\beta }$. If $S_{\alpha}=S_{\beta }$, for any $s\in [0, t]$, we have
	\begin{align*}
	\begin{split}
	|\alpha(s)-\beta(s)|&\leq \left| (\alpha(s)-s)-\big(   \alpha \big(  \lfloor\frac{s}{L\tau}\rfloor L\tau   \big)- \lfloor\frac{s}{L\tau}\rfloor L\tau \big)    \right|\\
	+&\left|\big(   \alpha \big(  \lfloor\frac{s}{L\tau}\rfloor L\tau   \big)- \lfloor\frac{s}{L\tau}\rfloor L\tau \big) - \big(   \beta \big(  \lfloor\frac{s}{L\tau}\rfloor L\tau   \big)- \lfloor\frac{s}{L\tau}\rfloor L\tau \big) \right|\\
	+&\left|(\beta(s)-s)- \big(   \beta \big(  \lfloor\frac{s}{L\tau}\rfloor L\tau   \big)- \lfloor\frac{s}{L\tau}\rfloor L\tau \big)\right|\\
	\leq & \frac{\theta }{4}+\frac{\theta }{4}\left| \dfrac{ \big(   \alpha \big(  \lfloor\frac{s}{L\tau}\rfloor L\tau   \big)- \lfloor\frac{s}{L\tau}\rfloor L\tau \big)  }{\theta/4} -\dfrac{ \big(   \beta \big(\lfloor\frac{s}{L\tau}\rfloor L\tau   \big)- \lfloor\frac{s}{L\tau}\rfloor L\tau \big)  }{\theta/4}   \right|+\frac{\theta}{4}\\
	\leq & \theta.
	\end{split}
	\end{align*}
	Moreover, for any $s\in [0, t], $ we have $d(\phi_{\alpha(s)}x, \phi_{\beta(s)}x)<\dfrac{\eta}{3}$. Since for any $0\leq s\leq t,$ one has
	\begin{align*}
	\begin{split}
	d(\phi_s y , \phi_s z)\leq & d(\phi_s y, \phi_{\alpha(s)}x)+d(\phi_{\alpha(s)}x, \phi_{\beta(s)}x )+d(\phi_{\beta(s)}x , \phi_s z) \\
	\leq & \epsilon +\frac{\eta}{3}+ \epsilon <\eta.
	\end{split}
	\end{align*}
	Especially,  $d(\phi^i_{L\tau }y, \phi^i_{L\tau } z)\leq \eta, i=0, 1, \cdots, n-1.$
	For any $A\in \bigvee\limits_{i=0}^{n-1} \phi_{L\tau }^{-i} \xi$, there exist $i_0, i_1, \cdots, i_{n-1}\in \{1, 2, \cdots, q+1\}$ such that
	$$A=A_{i_0}\cap \phi^{-1}_{L\tau }(A_{i_2})\cap \cdots \phi^{-(n-1)}_{L\tau } (A_{i_{n-1}}).$$
	By the choice of $\eta_0$, for the fixed $S_\alpha$, there exist at most $2^n$ many $A' s$ such that
	$A\cap B(x, t, \epsilon, \phi )\neq \emptyset.$  Note that the first term of $S_{\alpha}$ is zero and two consecutive terms of it differ at most by 1.
	So the number of  $S_{\alpha }$ is  at most $3^{n-1}$.  Hence, $\# W_n\leq 6^n.$
	
	For $k=0,1,\cdot\cdot\cdot, K-1,$ let
	\begin{align*}
	D_{k,n}=\{x\in E_{k}: \mu(B(x,t,\epsilon,\phi))> 6^{2n}\exp(-(k\gamma-p)n)\},
	\end{align*}
	and  
	\begin{align*}
	D_{\infty,n}=\{x\in E_{\infty}: \mu(B(x,t,\epsilon,\phi))> 6^{2n}\exp(-(\frac{1}{b}-p)n)\}.
	\end{align*}
	To prove  (\ref{1}), we consider the case for $k=0,1,\cdots,K-1$.
	
	If we can prove that $\sum_{n=N}^{\infty}\mu(D_{k,n})<\infty$, then apply the Borel-Cantelli Lemma: for $\mu$-a.e. $x\in E_{k}$,
	$$\liminf\limits_{n\rightarrow \infty }-\dfrac{\log \mu(B(x,nL\tau, \epsilon, \phi ))}{n}+ \frac{p+2\log6}{L\tau}\geq \frac{k\gamma}{L\tau}.$$
	Since $L \geq  \dfrac{p+2\log6}{p\tau},$
	we obtain that
	$$\liminf\limits_{n\rightarrow \infty }-\dfrac{\log \mu(B(x,nL\tau, \epsilon, \phi ))}{n}+ p \geq \frac{k\gamma}{L \tau}.$$
	Hence
	\begin{align*}
	\int_{X_{M}}  \liminf\limits_{n\rightarrow \infty }-\dfrac{\log \mu(B(x,nL\tau, \epsilon, \phi ))}{n} + p  ~d\mu
	&\geq \frac{\sum\limits_{k=0}^{K-1}k\gamma\mu(E_{k})}{L\tau}
	\\&= \frac{\sum\limits_{k=0}^{K-1}k\gamma\mu(B_{k})-\sum\limits_{k=0}^{K-1}k\gamma(\mu(B_{k})-\mu(E_{k}))}{L\tau}
	\\& \geq \frac{1}{L \tau}\int_{h^{-1}([0, M))} h(y) d\pi(y) -\frac{\gamma+K(K-1)\gamma b}{L\tau}.
	\end{align*}
	Let $b \rightarrow 0$, and then let  $\gamma\rightarrow 0$ (by letting $K$ tend to infinity), and we have
	\begin{align*}
	\int_{X_{M}}p+ \underline{h}_{\mu}(\phi, x)d\mu \geq \frac{1}{L \tau}\int_{h^{-1}([0, M))}h(y)d\pi(y).
	\end{align*}
	Now we estimate the measures of $D_{k,n}.$
	For any $x\in D_{k, n}$, in those  $6^{n}$ atoms of $\bigvee_{i=0}^{n-1}\phi_{L \tau }^{-i}\xi $
	such that $ A \cap B(x, t, \epsilon, \phi),$ there exists at least one corresponding atom of $\bigvee_{i=0}^{n-1}\phi_{L \tau }^{-i} \xi $  whose measure is greater than $6^{n}\exp(-(k\gamma-p)n)$. The total number of such atoms will not exceed $6^{-n}\exp((k\gamma-p)n).$ Hence $Q_{k, n},$ the total number of elements of $\bigvee_{i=0}^{n-1}\phi_{L \tau }^{-i}\xi$ that intersect $ D_{k,n},$ satisfies
	$$ Q_{k,n}\leq   6^{n}6^{-n}\exp((k\gamma-p)n)=\exp((k\gamma-p)n).$$
	Let $S_{k,n}$ denote the total measure of such $Q_{k,n}$ elements of $\bigvee_{i=0}^{n-1}\phi_{L \tau }^{-i}\xi $ whose intersections with $E_{k}$ have positive measure. By the definition of $E_{k}$,
	$$S_{k, n}\leq Q_{k,n} \exp((-k\gamma+ b)n)\leq \exp((b-p)n), $$
	which follows that
	$$ \mu(D_{k,n})\leq S_{k,n} \leq\exp((b-p) n).$$
	Since $b< p$, we have $\sum_{n=N}^{\infty}\mu(D_{k,n})\leq \infty $.
	
	To prove $(\ref{2})$, we need estimate the measures of $D_{\infty,n}$. In the above treatment for $D_{k,n}$, replacing  $k\gamma$ (resp.$D_{k,n}, Q_{k,n} $ and $S_{k,n}$) by $\frac{1}{b}$ (resp.$D_{\infty,n}, Q_{\infty,n}, $ and $S_{\infty,n}$), it also holds that $\sum\limits_{n=N}^{\infty}D_{\infty, n} < \infty.$ Then apply  the Borel-Cantelli Lemma again: for $\mu$-a.e. $x\in E_{\infty}$,
	$$  \liminf\limits_{n\rightarrow \infty} -\frac{1}{nL\tau}\log \mu(B(x,nL\tau,\epsilon,\phi))+ p \geq \frac{1}{b}.$$
	Let $b$ go to $0$, we conclude that  for $\mu$ almost every $x\in X_{\infty},$
	$$ \lim\limits_{\epsilon \rightarrow 0} \liminf\limits_{n\rightarrow \infty} -\frac{1}{nL\tau}\log \mu(B(x,nL\tau,\epsilon,\phi))+p= \infty.$$
\end{proof}

\section{Proof of Theorem \ref{main2}}\label{four}



The following theorem  will be used in proving the Theorem \ref{main2}.

\begin{theorem}\rm\cite{Dou}\label{dou}
	Let $(X,\phi)$ be a compact metric flow without fixed points. If $K$ is a non-empty compact subsets of $X$, then
	\begin{equation*}
	h_{top}^{B}(\phi,K)=\sup\{{\underline{h}_{\mu}(\phi):\mu\in \mathcal{M}(X),\mu(K)=1}\}
	\end{equation*}
\end{theorem}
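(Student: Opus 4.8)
The plan is to prove the two inequalities separately, following the scheme of the Feng--Huang variational principle for Bowen entropy (cf.\ \cite{Huang}, and its adaptation to flows in \cite{Dou}), with Lemma \ref{imp} serving as the main device for controlling the reparametrizations — just as it does in the counting argument inside the proof of Claim 1 above.

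\textbf{Lower bound.} I would show $h_{top}^B(\phi,K)\ge\underline h_\mu(\phi)$ for every $\mu$ with $\mu(K)=1$, by a mass distribution principle for reparametrization balls. Fix $a<\underline h_\mu(\phi)=\int\underline h_\mu(\phi,x)\,d\mu$; then $\mu(\{x\in K:\underline h_\mu(\phi,x)>a\})>0$. After exhausting this set by Borel subsets on which a single scale $\epsilon_0$ and a single time $N$ already witness $\mu(B(x,t,\epsilon_0,\phi))\le e^{-at}$ for all $t\ge N$, one reduces to the following claim: if $A\subseteq K$ is Borel with $\mu(A)>0$ and the preceding bound holds for all $x\in A$, then $\mathcal M^{a(1-\delta)}_{\epsilon_0/C}(\phi,A)\ge\mu(A)>0$ for a suitable $C=C(\delta)$. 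Indeed, given any cover of $A$ by balls $B(x_i,t_i,\epsilon_0/C,\phi)$ with $t_i\ge N$, discard the ones missing $A$ and pick $y_i\in B(x_i,t_i,\epsilon_0/C,\phi)\cap A$; the ``engulfing'' inclusion $B(x_i,t_i,\epsilon_0/C,\phi)\subseteq B(y_i,(1-\delta)t_i,\epsilon_0,\phi)$ holds because, by Lemma \ref{imp}, for $C$ large any reparametrization realizing $(\epsilon_0/C)$-closeness is $\eta$-close to the identity with $\eta$ small, so for $\alpha,\beta$ witnessing $y_i,z\in B(x_i,t_i,\epsilon_0/C,\phi)$ the composition $\gamma=\alpha^{-1}\circ\beta$ is a reparametrization on $[0,(1-\delta)t_i]$ and $d(\phi_{\gamma(s)}y_i,\phi_s z)<2\epsilon_0/C\le\epsilon_0$ there. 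This gives $\mu(B(x_i,t_i,\epsilon_0/C,\phi))\le e^{-a(1-\delta)t_i}$ (with $N$ chosen to absorb the $(1-\delta)$ factor), hence $\sum_i e^{-a(1-\delta)t_i}\ge\mu(A)$. Letting the radius tend to $0$, then $\delta\to0$ and $a\uparrow\underline h_\mu(\phi)$ yields $h_{top}^B(\phi,K)\ge h_{top}^B(\phi,A)\ge\underline h_\mu(\phi)$, so this direction follows from Proposition \ref{w} and the definitions. Note fixed-point-freeness enters exactly through the engulfing step.

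\textbf{Upper bound.} Conversely, fix $0<s<h_{top}^B(\phi,K)$ and any $\epsilon>0$; by the second characterization of $h_{top}^B$ we have $\mathcal M^s_\epsilon(\phi,K)=\lim_N\mathcal M^s_{N,\epsilon}(\phi,K)=\infty$. I would run the Moran-type construction inside the compact set $K$: using that $\mathcal M^s_{N,\epsilon}(\phi,\cdot)$ can be made arbitrarily large over the ``heavy'' pieces of $K$, build recursively nested finite families $\mathcal F_1,\mathcal F_2,\dots$ of reparametrization balls $B(x,t_x,\epsilon,\phi)$ with centers $x\in K$, times $t_x\ge N_k\uparrow\infty$, each member of $\mathcal F_{k+1}$ contained in a member of $\mathcal F_k$ and meeting $K$, and with descendant weights $\sum_{B\in\mathcal F_{k+1},\,B\subseteq B'}e^{-st_B}\ge1$ for every $B'\in\mathcal F_k$. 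Put $\mathcal K=\bigcap_k\bigcup_{B\in\mathcal F_k}(\overline B\cap K)$, a nonempty compact subset of $K$, and let $\mu$ be the probability measure supported on $\mathcal K$ obtained by spreading mass down the tree in proportion to the weights $e^{-st_B}$; then $\mu(K)=1$. It remains to check that for $\mu$-a.e.\ $x$ and every $\epsilon'$ below the separation scale of the families, $\liminf_{t\to\infty}-\frac1t\log\mu(B(x,t,\epsilon',\phi))\ge s$; since the radius $\epsilon'\mapsto B(x,t,\epsilon',\phi)$ is monotone, this single-scale bound already forces $\underline h_\mu(\phi,x)=\lim_{\epsilon'\to0}\liminf_t(\cdots)\ge s$, whence $\underline h_\mu(\phi)=\int\underline h_\mu(\phi,x)\,d\mu\ge s$. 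Letting $s\uparrow h_{top}^B(\phi,K)$ then completes the proof.

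\textbf{Main obstacle.} The decisive point is the last estimate in the upper bound — bounding the number of level-$k$ balls of the construction that a single reparametrization ball $B(x,t,\epsilon',\phi)$ of comparable time $t$ can meet. For $\mathbb{Z}$-actions this is the routine fact that Bowen balls behave like metric balls; here the parametrization may stretch or contract, so ``time $t$'' is defined only up to a sublinear error, and a naive metric comparison fails. This is handled exactly as in the proof of Claim 1 above: one discretizes a witnessing reparametrization $\alpha$ by a bounded-increment integer sequence (the analogue of $\{\lfloor(\alpha(kL\tau)-kL\tau)/(\theta/4)\rfloor\}$ there), bounds the number of admissible such sequences by $3^{n-1}$-type counting using Lemma \ref{imp}, and thereby controls how $B(x,t,\epsilon',\phi)$ can sit across the levels of the Moran set. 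This gives $\mu(B(x,t,\epsilon',\phi))\le C\,e^{-s(1-o(1))t}$ and hence the desired local entropy lower bound. I expect all the remaining steps (the recursive selection of $\mathcal F_k$ from $\mathcal M^s_{N,\epsilon}(\phi,\cdot)=\infty$, nonemptiness and compactness of $\mathcal K$, the tree-measure construction) to be standard once this reparametrization-counting lemma is in place.
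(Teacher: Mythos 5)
First, note that the paper itself does not prove this statement: it is quoted verbatim from \cite{Dou}, where the proof follows the Feng--Huang scheme \cite{Huang} — the lower bound via a Billingsley-type/mass-distribution argument, and the upper bound via a \emph{dynamical Frostman lemma} proved with a weighted version of the Bowen measure and functional analysis (Hahn--Banach/Riesz representation), with the reparametrization issues absorbed into the covering Lemma \ref{Dou} and the engulfing Lemma \ref{r} (both resting on Lemma \ref{imp}). Your lower bound is correct and is essentially this standard argument: your exhaustion to a single scale and single time, the discard-and-recenter step, and the engulfing inclusion $B(x_i,t_i,\epsilon_0/C,\phi)\subseteq B(y_i,(1-\delta)t_i,\epsilon_0,\phi)$ are exactly the content of Lemma \ref{r}(2) (with $C=2$) and reproduce the proof of part (2) of Theorem \ref{main3} in this paper; this direction is sound.

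The upper bound, however, has a genuine gap, and it is not where you locate it. The decisive missing step is the recursive construction itself: from $\mathcal M^s_{N,\epsilon}(\phi,K)=\infty$ you claim one can select nested finite families $\mathcal F_k$ of reparametrization balls with $\sum_{B\in\mathcal F_{k+1},\,B\subseteq B'}e^{-st_B}\geq 1$ for every $B'\in\mathcal F_k$. Nothing in the definition of $\mathcal M^s_{N,\epsilon}$ (an infimum over covers) lets you localize: knowing that every cover of $K$ is ``heavy'' does not produce any particular ball $B'$ whose trace $B'\cap K$ has large $\mathcal M^s_{N',\epsilon}$-value, nor balls of the next generation actually \emph{contained} in $B'$ (containment of reparametrization balls only comes with inflated radius and shrunken time, as in Lemma \ref{r}). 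In the classical Frostman lemma this recursion works because dyadic cubes form a net with disjointness and exact nesting; Bowen balls — a fortiori reparametrization balls — admit no such net, and no Besicovitch-type bounded-multiplicity covering theorem is available for them. This is precisely why Feng--Huang, and Dou--Fan--Qiu in the flow setting, replace the Moran construction by the weighted Bowen pre-measure $\mathcal W^s_{N,\epsilon}$ (shown comparable to $\mathcal M^s_{N,\epsilon}$) and a functional-analytic Frostman lemma producing $\mu$ with $\mu(B(x,t,\epsilon,\phi))\leq c^{-1}e^{-st}$ at all centers and all $t\geq N$ simultaneously. By contrast, the ``main obstacle'' you single out — counting reparametrizations via the discretized sequence as in Claim 1 — is already packaged in Lemmas \ref{imp}, \ref{r} and \ref{Dou} and is not the crux here. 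As written, your upper-bound argument does not go through; either supply a proof of the recursive selection (which would amount to a new net-structure result for reparametrization balls) or revert to the weighted-measure Frostman route of \cite{Huang} and \cite{Dou}.
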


\begin{proof}[Proof of Theorem \ref{main2}]
	 Let $\mu \in \mathcal{M}_{\phi}(X)$ and $Y$ a subset of $X$ with $\mu(Y)=1$.  Let $\{Y_{n}\}_{n\in \mathbb{N}}$ be an increasing sequence of compact subsets of $Y$ such that $\mu(Y_{n})>1-\frac{1}{n}$ for each $n\in \mathbb{N}$. Then by Proposition \ref{w},
	\begin{align}\label{4.1}
	h_{top}^{B}(\phi, Y)\geq h_{top}^{B}(\phi,\bigcup_{n\in \mathbb{N}}Y_{n})=\sup_{n}h_{top}^{B}(\phi,Y_{n})=\lim_{n\rightarrow\infty}h_{top}^{B}(\phi, Y_{n}).
	\end{align}
	Denote by $\mu_{n}$ the restriction of $\mu$ on $Y_{n}$, i.e., for any $\mu$-measurable set $A\subset X $,
	\begin{equation*}
	\mu_{n}(A)=\frac{\mu(A\bigcap Y_{n})}{\mu(Y_{n})}.
	\end{equation*}
	Joint with Theorem \ref{dou}, we have
	 \begin{equation}\label{4.2}
	 h_{top}^{B}(\phi,Y_{n})=\sup\{\underline{h}_{\nu}(\phi) : \nu\in M(X),\nu(Y_{n})=1\}
	 \geq \underline{h}_{\mu_{n}}(\phi).
	 \end{equation}
Note that
\begin{align*}
\underline{h}_{\mu_{n}}(\phi)&=\int_{Y_{n}}\lim_{\epsilon\rightarrow 0}\liminf_{t\rightarrow \infty}-\frac{1}{t}\log\mu_{n}(B(x,t,\epsilon,\phi))d\mu_{n}\\
&=\frac{1}{\mu(Y_{n})}\int_{Y_{n}}\lim_{\epsilon\rightarrow 0}\liminf_{t\rightarrow \infty}-\frac{1}{t}\log\frac{\mu(B(x,t,\epsilon,\phi)\cap Y_{n})}{\mu(Y_{n})}d\mu\\
&\geq \frac{1}{\mu(Y_{n})}\int_{Y_{n}}\lim_{\epsilon\rightarrow 0}\liminf_{t\rightarrow \infty}-\frac{1}{t}\log\frac{\mu(B(x,t,\epsilon,\phi))}{\mu(Y_{n})}d\mu\\
&=\frac{1}{\mu(Y_{n})}\int_{Y_{n}}\lim_{\epsilon\rightarrow 0}\liminf_{t\rightarrow \infty}-\frac{1}{t}\log\mu(B(x,t,\epsilon,\phi))d\mu.
\end{align*}
By Theorem \ref{ka},
$$ \int_{Y}\lim_{\epsilon \rightarrow 0}\liminf_{t\rightarrow \infty}-\frac{1}{t}\log\mu(B(x,t,\epsilon,\phi))d\mu=\underline{h}_{\mu}(\phi)=h_{\mu}(\phi_{1}).$$
Hence $$\lim\limits_{n \rightarrow \infty}\underline{h}_{\mu_{n}}(\phi) \geq h_{\mu}(\phi_{1}).$$
Together with (\ref{4.1}) and (\ref{4.2}),
\begin{equation*}
 h_{top}^{B}(\phi,Y)\geq h_{\mu}(\phi_{1}).
\end{equation*}

\end{proof}
 Since $\mu(G_{\mu}(\phi))=1$ for $\mu \in \mathcal{E}_{\phi},$  we have  the following corollary by Theorem \ref{main2} .
\begin{corollary}\label{lower}
	 Let $( X, \phi)$ be a compact metric flow without fixed points and $\mu \in \mathcal {E}_{\phi}(X)$, then $h_{\mu}(\phi_{1}) \leq h_{top}^{B}(\phi, G_{\mu}(\phi)).$
\end{corollary}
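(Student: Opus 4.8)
The plan is to obtain the corollary as an immediate consequence of Theorem \ref{main2}. Since $\mathcal{E}_{\phi}(X) \subset \mathcal{M}_{\phi}(X)$, the only thing that needs checking is that the set $G_{\mu}(\phi)$ of generic points for the flow carries full $\mu$-measure whenever $\mu$ is $\phi$-ergodic; once this is established, applying Theorem \ref{main2} with $Y = G_{\mu}(\phi)$ finishes the argument.

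To see that $\mu(G_{\mu}(\phi)) = 1$, I would invoke the Birkhoff ergodic theorem for flows (see \cite{Kry}): for $\mu \in \mathcal{M}_{\phi}(X)$ and any fixed $f \in C(X)$, the time average $\frac{1}{t}\int_{0}^{t} f(\phi_{\tau}x)\,d\tau$ converges as $t\to\infty$ for $\mu$-a.e.\ $x$, and when $\mu$ is ergodic the limit equals $\int_{X} f\,d\mu$ for $\mu$-a.e.\ $x$. Because $X$ is a compact metric space, $C(X)$ is separable; fixing a countable dense subset $\{f_{k}\}_{k\geq 1}$ of $C(X)$ and applying the ergodic theorem to each $f_{k}$ produces a single set $X_{0}$ of full $\mu$-measure (the countable intersection of the individual full-measure sets) on which $\frac{1}{t}\int_{0}^{t} f_{k}(\phi_{\tau}x)\,d\tau \to \int_{X} f_{k}\,d\mu$ for every $k$. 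A routine uniform-approximation estimate then upgrades this to: for every $f \in C(X)$ and every $x \in X_{0}$, $\frac{1}{t}\int_{0}^{t} f(\phi_{\tau}x)\,d\tau \to \int_{X} f\,d\mu$. Hence $X_{0} \subset G_{\mu}(\phi)$ and $\mu(G_{\mu}(\phi)) = 1$.

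With this in hand, I apply Theorem \ref{main2} directly: taking $Y = G_{\mu}(\phi)$, which is a subset of $X$ with $\mu(Y) = 1$ and $\mu \in \mathcal{M}_{\phi}(X)$, the theorem gives $h_{\mu}(\phi_{1}) \leq h_{top}^{B}(\phi, G_{\mu}(\phi))$, which is precisely the assertion of the corollary.

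There is no genuine obstacle at this stage: all the analytic content is packaged into Theorem \ref{main2}, whose proof relies on the non-ergodic Brin--Katok formula (Theorem \ref{ka}) and the variational principle for Bowen entropy (Theorem \ref{dou}). The single point worth spelling out carefully is the reduction from ``all $f \in C(X)$'' to a countable dense family, so that the (a priori uncountable) condition defining $G_{\mu}(\phi)$ does not destroy the full-measure conclusion supplied by the ergodic theorem.
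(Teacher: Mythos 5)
Your proposal is correct and follows essentially the same route as the paper: note that $\mu(G_{\mu}(\phi))=1$ by the Birkhoff ergodic theorem for flows and then apply Theorem \ref{main2} with $Y=G_{\mu}(\phi)$. The only difference is that you spell out the standard reduction to a countable dense family in $C(X)$, which the paper leaves implicit in its citation of \cite{Kry}.
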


\section{Proof of Theorem \ref{main}}\label{five}
In this section, we will show the proof of  Theorem \ref{main}. Corollary \ref{lower} gives the lower bound. For the upper bound, we use the ideas of Pfister and Sullivan \cite{Pfi}.

 For $\mu \in  \mathcal{E}_{\phi}(X),$ let $\left\lbrace K_{m}\right\rbrace _{m\in \mathbb{N}}$ be a decreasing sequence of closed convex neighborhoods of $\mu$ in $\mathcal{M}(X)$ and set
 $$ A_{n,m}=\left\lbrace {x\in X: \frac{1}{n}\int_{0}^{n}\delta_{x}\circ\phi_{-\tau}d\tau \in K_{m}}\right\rbrace, ~~\text {for} ~~m, n \in \mathbb{N}.$$
Then for any $m$, $N \geq 1$, $G_{\mu}(\phi)\subset \bigcup\limits_{n \geq N} A_{n,m}$.
For $E \subset X $ and $\epsilon>0,$ we say that $E$ is a $(t,\epsilon)$-strongly separated set in $X$ if for every $x, y \in E, x \neq y$ and for every  $\alpha, \beta \in Rep [0,t]$,
\begin{align*}
d(\phi_{\alpha(s)}x, \phi_{s}y)> \epsilon ~~~ \text {for~~ some}~ s\in [0,t]
\end{align*}
or
\begin{align*}
d(\phi_{\beta(s)}y,\phi_{s}x) >\epsilon
~~~ \text {for~~ some }~ s\in [0,t].
\end{align*}
Denote by $S_{n}(A_{n,m}, \epsilon)$ the maximal cardinality of any $( n, \epsilon)$-strongly separated subset of $A_{n,m}.$

\noindent {\bf Claim 2}. It holds that 
$$ \lim\limits_{\epsilon \rightarrow 0} \lim\limits_{m \rightarrow \infty}\limsup_{n\to\infty}\frac{1}{n}\log S_{n}(A_{n,m},\epsilon)\leq h_{\mu}(\phi_{1}).$$
\begin{proof}[Proof of the Claim 2]
	If not, suppose that
	$$\lim\limits_{\epsilon \rightarrow 0} \lim\limits_{m \rightarrow \infty}\limsup_{n\to\infty}\frac{1}{n}\log S_{n}(A_{n,m},\epsilon)> h_{\mu}(\phi_{1})+ \delta$$
for some $\delta >0.$ Then there exist $\epsilon_{0}>0$ and $M\in \mathbb{N}$ such that for any $0<\epsilon< \epsilon_{0}$ and any $m > M$, it holds that
\begin{align*}
\limsup_{n\to\infty}\frac{1}{n
}\log S_{n}(A_{n,m},\epsilon)> h_{\mu}(\phi_{1}) + \delta.
\end{align*}
Hence, we can find a sequence $\left\lbrace m(n) \right\rbrace $ with $m(n) \rightarrow\infty $ such that
\begin{align*}
\limsup_{n\to\infty}\frac{1}{n}\log S_{n}(A_{n,m(n)},\epsilon)\geq h_{\mu}(\phi_{1}) + \delta.
\end{align*}
Now let $E_{n}$ be a $(n , \epsilon)$-strongly separated set of $A_{n,m(n)} $ with maximal cardinality and define
\begin{align*}
\delta_{n}=\frac{1}{\sharp E_{n}}\sum\limits_{x\in E_{n} } \delta_{x} ~~ \text{and} ~~\mu_{n}=\frac{1}{n}\int_{0}^{n} \delta_{n}\circ\phi_{-\tau} d\tau.
\end{align*}
Since
\begin{align*}
\frac{1}{n}\int_{0}^{n} \delta_{x}\circ\phi_{-\tau} d\tau\in K_{m(n)}, ~\text{for}~~\text{any}~ x\in E_{n}
\end{align*}
and
\begin{align*}
\mu_{n}=\frac{1}{\sharp E_{n}}\sum\limits_{x\in E_{n}}\frac{1}{n}\int_{0}^{n}\delta_{x}\circ\phi_{-\tau} d\tau,
\end{align*}
 $\mu_{n} \in K_{m(n)}$ by the convexity of $K_{m}'s$. Hence $\mu_{n} \rightarrow  \mu $ as $n\rightarrow \infty$. For the above $\epsilon>0$,  there exists $0<\gamma<\epsilon$ such that for  any $0\leq s\leq 1$ and $x, y \in X$, we have  $d(\phi_{s}x,\phi_{s}y)< \epsilon$ whenever $d(x, y)< \gamma$.
Let $\beta$ be a finite Borel partition of $X$ such that diam$(\beta) < \gamma $ and $\mu(\partial\beta)=0.$ Then  each element  of  $\bigvee_{i=0}^{n}\phi_{-i}\beta$  contains at most one point in $E_{n}.$ In fact,  for any $x\neq y \in E_{n}$, there exists  $s\in[0,n]$ such that  $d(\phi_{s}x, \phi_{s}y)> \epsilon > \gamma.$ If $s\in \{ 0,1, \cdots, n\}$, then  $x, y  $ can't contain the same element of  $\bigvee_{i=0}^{n}\phi_{-i}\beta$.  If  $\lfloor s\rfloor\leq s\leq \lfloor s \rfloor+1 $, by the choice of $\gamma$, it follows that  $d(\phi_{\lfloor s\rfloor}x,\phi_{\lfloor s \rfloor}y)>\gamma $. Since  diam$(\beta)< \gamma $, this implies  that $ x, y \in E_{n}$  contain the different elements of  $\bigvee_{i=0}^{n}\phi_{-i}\beta$. 
Hence
$S_{n}(A_{n,m(n)},\epsilon)$ members of $\bigvee\limits_{i=0}^{n}\phi_{-i} \beta$ each have $\delta_{n}$-measure $S_{n}(A_{n,m(n)},\epsilon)$ and the others have $\delta_{n}$-measure zero. Fix natural numbers $q$, $n$ with $ 1<q<n$ and define $a(j)$ by $a(j)=[\frac{n+1-j}{q}]$ for $0\leq j\leq q-1$. Fix $0\leq j \leq q-1$. We have
\begin{align*}
\bigvee_{i=0}^{n}\phi_{-i} \beta=\bigvee_{r=0}^{a(j)-1}\phi_{-(rq+j)}\bigvee_{i=0}^{q-1} \phi_{-i}\beta \vee \bigvee_{l\in S}\phi_{-l} \beta
\end{align*}
and $S$ has cardinality at most $2q$. Therefore
\begin{align*}
\log S_{n}(A_{n,m(n)},\epsilon)=H_{\delta_{n}}(\bigvee_{i=0}^{n}\phi_{-i}\beta).
\end{align*}
Given $0<\theta<1$. Similar to  the above discussion, 
each element  of  $\bigvee_{i=0}^{n}\phi_{-i-\theta}\beta$  contains at most one point in $E_{n},$ then
\begin{align*}
&\log S_{n}(A_{n,m(n)},\epsilon)=H_{\delta_{n}}(\bigvee_{i=0}^{n}\phi_{-i-\theta}\beta) \\&\leq \sum_{r=0}^{a(j)-1}H_{\delta_{n}\circ \phi_{-\theta}}(\phi_{-(rq+j)}(\bigvee_{i=0}^{q-1}\phi_{-i}\beta))+\sum\limits_{k\in S}H_{\delta_{n}\circ \phi_{ -\theta}}(\phi_{-k}\beta)\\&\leq\sum_{r=0}^{a(j)-1}H_{\delta_{n}\circ \phi_{-(\theta +rq+j)}}(\bigvee_{i=0}^{q-1}\phi_{-i}\beta)+2q\log \# \beta.
\end{align*}
It is easily seen that
\begin{align*}
\int_{0}^{1}\log S_{n}(A_{n,m(n)},\epsilon) d \theta &\leq \sum_{r=0}^{a(j)-1}\int_{0}^{1}H_{\delta_{n}\circ \phi_{-(\theta +rq+j)}}(\bigvee_{i=0}^{q-1}\phi_{-i}\beta)d\theta+2q\log \# \beta \\&\leq \sum_{r=0}^{a(j)-1}H_{\int_{0}^{1}\delta_{n}\circ \phi_{-(\theta +rq+j)} d\theta}(\bigvee_{i=0}^{q-1}\phi_{-i}\beta)+2q\log \#\beta.
\end{align*}
The last equality  follows from the concavity  of the function $-x\log x$.
Sum this inequality over $j$ from to $q-1$ , we can get
\begin{align*}
q\log S_{n}(A_{n,m(n)},\epsilon)=&\int_{0}^{1}q\log S_{n}(A_{n,m(n)},\epsilon) d \theta \\&\leq \sum_{p=0}^{n}H_{\int_{0}^{1} \delta_{n}\circ \phi_{-(p+\theta)}d\theta} (\bigvee_{i=0}^{q-1}\phi_{-i}\beta)+ 2q^{2}\log \# \beta.
\end{align*}
 If we divide by $n+1$, we obtain
\begin{align}
\frac{q}{n+1}\log S_{n}(A_{n,m(n)},\epsilon)& \leq \frac{1}{n+1}\sum_{p=0}^{n}H_{\int_{0}^{1} \delta_{n}\circ \phi_{-(p+\theta)}d\theta} (\bigvee_{i=0}^{q-1}\phi_{-i}\beta)+ \frac{2q^{2}\log \# \beta}{n+1}\notag
\\&\leq H_{\frac{1}{n+1}\sum\limits_{p=0}^{n}\int_{0}^{1} \delta_{n}\circ \phi_{-(p+\theta)}d\theta} (\bigvee_{i=0}^{q-1}\phi_{-i}\beta)+ \frac{2q^{2}\log \# \beta}{n+1}\notag\\&= H_{\frac{1}{n+1}\int_{0}^{n}\delta_{n}\circ \phi_{-\tau}d\tau} (\bigvee_{i=0}^{q-1}\phi_{-i}\beta)+ \frac{2q^{2}\log \# \beta}{n+1}.\label{*}
\end{align}
We know the members of $\bigvee\limits_{i=0}^{q-1}\phi_{-i}\beta$ have boundaries of $\mu$-measure zero, so $\lim\limits_{j\rightarrow \infty}\mu_{n_{j}}(B)=\mu(B)$ for each member $B$ of $\bigvee\limits_{i=0}^{q-1}\phi_{-i}\beta$ and  $\lim\limits_{j\rightarrow \infty}H_{\mu_{n_{j}}}(\bigvee\limits_{i=0}^{q-1}\phi_{-i}\beta)=H_{\mu}(\bigvee\limits_{i=0}^{q-1}\phi_{-i}\beta).$
Therefore replacing $n$ by $n_{j}$ in (\ref{*}) and letting $j$ go to $\infty$, we have
 $$\limsup _{n\to\infty}\frac{1}{n}\log S_{n}(A_{n,m(n)},\epsilon)\leq \frac{1}{q}H_{\mu}(\bigvee\limits_{i=0}^{q-1}\phi_{-i}\beta).$$
This lead to
$$ \limsup _{n\to\infty}\frac{1}{n}\log S_{n}(A_{n,m(n)},\epsilon)\leq h_{\mu}(\phi_{1}),$$
giving a contradiction.
\end{proof}
By the Claim 2, for each $\delta >0,$ there exists $\epsilon_{0}>0$ satisfying that for any $0<\epsilon<\epsilon_{0},$ there exists $M\in \mathbb{N}$ such that whenever $m>M$, it holds that
\begin{align*}
\limsup_{n\to\infty}\frac{1}{n}\log S_{n}(A_{n,m(n)}
,\epsilon)\leq h_{\mu}(\phi_{1})+ \frac{\delta}{2}.
\end{align*}
Let $E_{n,m}$ be a $(n,\epsilon)$-strongly separated set of $A_{n,m}$ with maximal cardinality, according to Lemma 1.3 of \cite{Thoma1}, it follows that 

 $$A_{n,m}\subset\bigcup\limits_{x\in E_{n,m}}B(x,n,2\epsilon,\phi).$$ Hence for
$s=h_{\mu}(\phi_{1})+ 2\delta,$ we have
\begin{align*}
\mathcal{M}_{N,2\epsilon}^{s}(\phi, G_{\mu}(\phi))&\leq \mathcal{M}_{N,2\epsilon}^{s}(\phi,\bigcup_{n \geq N}A_{n,m})\\&\leq \sum_{n \geq N}\mathcal{M}_{n, 2\epsilon}^{s}(\phi,A_{n,m})\\&\leq \sum_{n \geq N} \exp((h_{\mu}(\phi_{1})+\delta -s)n)
\\&=\sum_{n\geq N} \exp(-\delta n).
\end{align*}
Letting $n\rightarrow \infty$, we obtain 
$$\mathcal{M}_{2\epsilon}^{s}(\phi,G_{\mu}(\phi)) \leq \lim\limits_{N\rightarrow \infty}\sum_{n\geq N} \exp(-\delta n)=0,$$
which implies that $h_{top}^{B}(\phi, G_{\mu}(\phi))\leq h_{\mu}(\phi_{1}).$
\section{A Billingsley type Theorem for Bowen entropy}{}\label{six}

\begin{lemma}{\rm\cite{Dou}}\label{r}
	Let $(X,\phi)$ be a compact metric flow without fixed points. Let $0<\eta<1,$  $t>1$, and $\theta$ be as in Lemma \ref{imp}. Write $\tilde{t}=(1-\eta)t.$ Then for any $0<\epsilon<\theta$, 
	\begin{enumerate}[1.]	
		\item  if $y\in B(x,t, \epsilon, \phi)$, then $x\in B(y, \tilde{t},\epsilon,\phi);$
		
		\item  if $y \in B(x,t, \frac{\epsilon}{2}, \phi)$, then $B(x,t, \frac{\epsilon}{2}, \phi)\subseteq B(y, \tilde{t}, \epsilon, \phi)$.
	\end{enumerate}
	
\end{lemma}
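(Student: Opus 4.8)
The plan is to realise each reparametrization ball appearing in the hypotheses by an explicit witnessing reparametrization, and then to build the witness required for the conclusion by inverting and composing these maps; Lemma~\ref{imp} is precisely the tool that keeps such an inverse or composite an honest reparametrization defined on the whole of the shrunken interval $[0,\tilde t]$. The observation I would record first is that whenever $\alpha\in Rep[0,t]$ satisfies $d(\phi_{\alpha(s)}x,\phi_s y)<\theta$ for all $s\in[0,t]$ --- which applies to every witness below, since the relevant radii $\epsilon$ and $\frac{\epsilon}{2}$ are both $<\theta$ --- then, because $t>1$, Lemma~\ref{imp} forces $\alpha$ to be increasing (a decreasing witness would violate $|\alpha(s)-s|<\eta s$ at some $s>1$) and gives $\alpha(t)>(1-\eta)t=\tilde t$; hence $[0,\tilde t]\subseteq[0,\alpha(t)]$, the range of $\alpha$, so $\alpha^{-1}$ is well defined on $[0,\tilde t]$.

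For part~1, given $y\in B(x,t,\epsilon,\phi)$ with witness $\alpha$, I would take $\beta:=\alpha^{-1}|_{[0,\tilde t]}$; by the observation $\beta$ is an increasing homeomorphism onto its image with $\beta(0)=0$, hence $\beta\in Rep[0,\tilde t]$, and for $s\in[0,\tilde t]$, writing $u=\alpha^{-1}(s)\in[0,t]$ so that $\alpha(u)=s$, one gets $d(\phi_{\beta(s)}y,\phi_s x)=d(\phi_u y,\phi_{\alpha(u)}x)<\epsilon$, i.e.\ $x\in B(y,\tilde t,\epsilon,\phi)$. For part~2, I would fix a witness $\alpha\in Rep[0,t]$ for $y\in B(x,t,\frac{\epsilon}{2},\phi)$, take an arbitrary $z\in B(x,t,\frac{\epsilon}{2},\phi)$ with witness $\gamma\in Rep[0,t]$, and put $\beta:=(\alpha^{-1}\circ\gamma)|_{[0,\tilde t]}$. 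For $r\in[0,\tilde t]$, setting $s:=\beta(r)=\alpha^{-1}(\gamma(r))$ gives $\alpha(s)=\gamma(r)$, whence $d(\phi_{\beta(r)}y,\phi_{\gamma(r)}x)=d(\phi_s y,\phi_{\alpha(s)}x)<\frac{\epsilon}{2}$ and $d(\phi_{\gamma(r)}x,\phi_r z)<\frac{\epsilon}{2}$, so the triangle inequality yields $d(\phi_{\beta(r)}y,\phi_r z)<\epsilon$; as $r$ and $z$ are arbitrary, this is the claimed inclusion $B(x,t,\frac{\epsilon}{2},\phi)\subseteq B(y,\tilde t,\epsilon,\phi)$.

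The one genuinely delicate step, and where I expect the main obstacle, is checking in part~2 that $\beta=\alpha^{-1}\circ\gamma$ is defined on \emph{all} of $[0,\tilde t]$, i.e.\ that $\gamma(r)$ stays inside the range $[0,\alpha(t)]$ of $\alpha^{-1}$ for every $r\le\tilde t$, equivalently that $\gamma(\tilde t)\le\alpha(t)$. This is exactly why the statement contracts the time window from $t$ to $\tilde t=(1-\eta)t$ and puts radius $\frac{\epsilon}{2}$ on the left-hand balls: Lemma~\ref{imp} keeps $\alpha(t)$ above $(1-\eta)t$ while confining $\gamma(\tilde t)$ to within a factor $(1\pm\eta)$ of $\tilde t$ (with the additive bound replacing the multiplicative one on the portion of $[0,\tilde t]$ inside $[0,1]$), and a short comparison of these estimates --- for which the precise dependence of $\theta$ on $\eta$ in Lemma~\ref{imp} is what counts --- gives $\gamma(\tilde t)\le\alpha(t)$. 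The remaining verifications --- the substitutions $s=\alpha(u)$ and $s=\alpha^{-1}(\gamma(r))$, and that the inverse and composite are genuine reparametrizations fixing $0$ --- are routine, so the heart of the proof is this quantitative bookkeeping of time distortions, which is exactly what Lemma~\ref{imp}, as opposed to mere continuity of reparametrizations, provides.
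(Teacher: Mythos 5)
Part 1 of your argument is correct, and since the paper only quotes this lemma from \cite{Dou} without reproving it, I judge your proposal on its own merits. The gap is in part 2, at exactly the step you flag as delicate: the two separate applications of Lemma \ref{imp} give only $\alpha(t)>t-\eta t=\tilde t$ and $\gamma(\tilde t)<\tilde t+\eta\tilde t=(1-\eta^{2})t$ (or $\gamma(\tilde t)<\tilde t+\eta$ if $\tilde t\le 1$), and since $(1-\eta^{2})t>(1-\eta)t$ these bounds are perfectly compatible with $\gamma(\tilde t)>\alpha(t)$: nothing couples the two witnesses, so $\alpha$ may lag essentially by the full factor $(1-\eta)$ while $\gamma$ runs ahead by the full factor $(1+\eta)$. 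Moreover, Lemma \ref{imp} asserts only the existence of $\theta$ for a given $\eta$, with no quantitative dependence, so the "precise dependence of $\theta$ on $\eta$" you propose to exploit is not available. Consequently the asserted inequality $\gamma(\tilde t)\le\alpha(t)$ does not follow from the comparison you describe, and $\beta=\alpha^{-1}\circ\gamma$ is not known to be defined on all of $[0,\tilde t]$; this is a genuine hole, not a routine verification.

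The inequality is nevertheless true, but it has to be obtained by a bootstrap: apply Lemma \ref{imp} a second time, to the composite itself, viewed as a time correspondence between $y$ and $z$ — this is precisely where the halved radius $\epsilon/2$ together with $\epsilon<\theta$ is used. Let $[0,s_{1}]\subseteq[0,\tilde t]$ be the maximal interval on which $\gamma\le\alpha(t)$, so that $\beta=\alpha^{-1}\circ\gamma$ is defined there; your triangle inequality gives $d(\phi_{\beta(s)}y,\phi_{s}z)<\epsilon/2+\epsilon/2=\epsilon<\theta$ on $[0,s_{1}]$, hence Lemma \ref{imp} applied to the pair $(y,z)$ bounds the distortion of $\beta$ by $\eta$ (rather than the $2\eta$ one would get by adding the distortions of $\alpha$ and $\gamma$). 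If $s_{1}<\tilde t$, then by continuity $\gamma(s_{1})=\alpha(t)$, i.e.\ $\beta(s_{1})=t$, whereas the distortion bound forces $\beta(s_{1})<(1+\eta)s_{1}\le(1-\eta^{2})t<t$ when $s_{1}>1$, and $\beta(s_{1})<s_{1}+\eta\le(1-\eta)t+\eta<t$ when $s_{1}\le 1$ (using $t>1$) — a contradiction. Hence $s_{1}=\tilde t$, the composite is defined on all of $[0,\tilde t]$, and your computation then completes part 2. So your skeleton (invert and compose, with Lemma \ref{imp} controlling the time window) is the right one, but the domain verification genuinely requires this second, self-improving application of Lemma \ref{imp} on a maximal interval, not a comparison of the two separate estimates.
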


\begin{lemma}{\rm \cite{Dou}}\label{Dou}
	Let	$(X, \phi)$  be a compact metric flow without fixed points. For $0<\eta<1$, let $\theta$ be as Lemma \ref{imp}. Let $\mathcal{B}=\{B(x, t, r, \phi)\}_{(x, t)\in I}$  be a family of reparametrization balls in $X$
	with $0<r< \frac{\theta}{2}$ and $ t> \frac{1}{(1-\eta)^{2}}$. Then there exists a finite or countable subfamily $\mathcal{B}'=\{B(x, t, r, \phi )\}_{(x, t)\in I'}(I'\subset I)$ of pairwise disjoint reparametrization balls in $\mathcal{B}$ such that $$ \bigcup_{B\in \mathcal{B}} B\subset \bigcup_{ (x, t)\in I'}B(x, \hat{t}, 5r, \phi)$$ 
	where $\hat{t}=(1-\eta)^2t$.
	\end{lemma}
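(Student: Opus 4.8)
The plan is to prove this as a Vitali‑type $5r$‑covering lemma by the classical greedy‑selection scheme, carried over to reparametrization balls with the help of Lemmas \ref{imp} and \ref{r}. First I would group the balls of $\mathcal{B}$ by the size of their time parameter, using the block ratio $(1-\eta)^{-1}$: for $n\geq 0$ set $b_n=(1-\eta)^{-(n+2)}$ and
\[
\mathcal{F}_n=\{B(x,t,r,\phi)\in\mathcal{B}:\ b_n\leq t<b_{n+1}\}.
\]
Since every ball of $\mathcal{B}$ has $t>(1-\eta)^{-2}=b_0$, we get $\mathcal{B}=\bigcup_{n\geq0}\mathcal{F}_n$. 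Now build $\mathcal{B}'$ inductively: let $\mathcal{G}_0$ be a maximal pairwise disjoint subfamily of $\mathcal{F}_0$, and having chosen $\mathcal{G}_0,\dots,\mathcal{G}_{n-1}$ let $\mathcal{G}_n$ be a maximal pairwise disjoint subfamily of $\{B\in\mathcal{F}_n:\ B\cap B''=\emptyset\ \text{for all}\ B''\in\mathcal{G}_0\cup\cdots\cup\mathcal{G}_{n-1}\}$. Such maximal families exist by Zorn's lemma, and since they consist of pairwise disjoint non‑empty open subsets of the separable space $X$ they are finite or countable. Put $\mathcal{B}'=\bigcup_{n\geq0}\mathcal{G}_n$; it is finite or countable and pairwise disjoint by construction.

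The geometric heart of the matter is the following claim, which I would isolate and prove first: \emph{if $z\in B(x,t,r,\phi)\cap B(x',t',r,\phi)$ with $t,t'>(1-\eta)^{-2}$ and $t\geq(1-\eta)t'$, then $B(x,t,r,\phi)\subseteq B(x',(1-\eta)^2t',5r,\phi)$.} To prove it, take $w\in B(x,t,r,\phi)$. Since $2r<\theta$, Lemma \ref{r}(2) applied with $\epsilon=2r$ and the point $z$ gives $B(x,t,r,\phi)\subseteq B(z,(1-\eta)t,2r,\phi)$, so there is $\gamma\in Rep[0,(1-\eta)t]$ with $d(\phi_{\gamma(u)}z,\phi_uw)<2r$ on $[0,(1-\eta)t]$; and $z\in B(x',t',r,\phi)$ provides $\beta\in Rep[0,t']$ with $d(\phi_{\beta(s)}x',\phi_sz)<r$ on $[0,t']$. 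Write $\hat t'=(1-\eta)^2t'$. From $t\geq(1-\eta)t'$ we get $\hat t'\leq(1-\eta)t$, so $[0,\hat t']\subseteq[0,(1-\eta)t]$; and since $d(\phi_{\gamma(u)}z,\phi_uw)<2r<\theta$, Lemma \ref{imp} applies to $\gamma$, giving that $\gamma$ is increasing with $\gamma(u)<(1+\eta)u$ for $u>1$, whence $\gamma(u)\leq\gamma(\hat t')<(1+\eta)(1-\eta)^2t'<t'$ for all $u\in[0,\hat t']$ (using $\hat t'>1$ and $(1+\eta)(1-\eta)^2<1$). Thus for every $u\in[0,\hat t']$ we may put $s=\gamma(u)\in[0,t']$ in the $\beta$‑estimate and combine it with the $\gamma$‑estimate to obtain $d(\phi_{\beta(\gamma(u))}x',\phi_uw)<r+2r=3r<5r$. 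Finally $\sigma:=\beta\circ\gamma|_{[0,\hat t']}$ lies in $Rep[0,\hat t']$ (composition of reparametrizations, well defined because $\gamma(\hat t')<t'$), so $w\in B(x',\hat t',5r,\phi)$, which proves the claim.

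With the claim available, the covering step is routine. Fix $B=B(x,t,r,\phi)\in\mathcal{B}$, say $B\in\mathcal{F}_n$. By maximality of $\mathcal{G}_n$, either $B\in\mathcal{G}_n$, or $B$ meets some element of $\mathcal{G}_0\cup\cdots\cup\mathcal{G}_n$; in both cases $B$ meets some $B'=B(x',t',r,\phi)\in\mathcal{B}'$ with $B'\in\mathcal{G}_k$, $k\leq n$ (allowing $B'=B$, $z=x$, since $x\in B$). From the block definition: if $k<n$ then $t'<b_{k+1}\leq b_n\leq t$, while if $k=n$ then $(1-\eta)t'<b_n\leq t$; in either case $t\geq(1-\eta)t'$. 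The claim therefore gives $B\subseteq B(x',(1-\eta)^2t',5r,\phi)$ with $(x',t')\in I'$, and hence $\bigcup_{B\in\mathcal{B}}B\subseteq\bigcup_{(x,t)\in I'}B(x,\hat t,5r,\phi)$, as required.

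I expect the claim to be the main obstacle: all the real work is in tracking how the reparametrizations $\gamma$ and $\beta$ act on their intervals, verifying that the composition $\sigma=\beta\circ\gamma|_{[0,\hat t']}$ is genuinely defined on the whole of $[0,\hat t']$ (this is exactly where Lemma \ref{imp} enters, to push $\gamma(\hat t')$ below $t'$), and checking that the $(1-\eta)$‑loss coming from Lemma \ref{r}(2) together with the $(1-\eta)$‑slack built into the cross‑block comparison $t\geq(1-\eta)t'$ combine to exactly the factor $(1-\eta)^2$ demanded by $\hat t'=(1-\eta)^2t'$ — which is precisely what forces the block ratio to be $(1-\eta)^{-1}$. (Incidentally the argument yields the inflation factor $3r$, so the stated $5r$ is comfortably attained.)
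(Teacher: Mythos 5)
Your proof is correct; note, however, that the paper itself does not prove this lemma at all --- it is quoted from \cite{Dou}, and your argument (block the balls by $t$ with ratio $(1-\eta)^{-1}$, select greedily, and show via Lemma \ref{r}(2), Lemma \ref{imp} and composition of reparametrizations that an intersecting ball with $t\geq(1-\eta)t'$ lies in the $5r$-enlargement with time $(1-\eta)^2t'$) is exactly the standard Vitali-type proof given in that reference. The only point worth tightening is your assertion that Lemma \ref{imp} makes $\gamma$ increasing: the lemma only gives $|\gamma(u)-u|<\eta u$ for $u>1$, and monotonicity (hence $\gamma(u)\geq 0$ on $[0,\hat t']$, needed for $\beta\circ\gamma$ to be defined) follows from this estimate combined with the fact that $\gamma$ is a homeomorphism onto its image fixing $0$, so it cannot be orientation-reversing once $\gamma(u)>0$ at some $u>1$.
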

Now we proceed with the proof  of Theorem \ref{main3}. We follow the proof originally due to \cite{Ma}.

\begin{proof}[Proof of Theorem \ref{main3}]
	
	(1) For $0<\eta <1$, let $\theta>0$ be as in  Lemma \ref{imp}.  Fix $\epsilon >0$,  since $\underline{h}_{\mu}(x)\leq s$ for all $x\in E$, we have  $E=\bigcup_{k=1}^\infty E_k$,
	where $$E_k=\left\{ x\in E:~ \liminf\limits_{t\to \infty } \dfrac{-\log \mu (B(x, t, r, \phi ))}{t} < s+\epsilon ~~\text {for}~ \text {all}~~ r\in (0, \frac{1}{k})
	\right\}.$$
	Now  we fix $k\geq 1$ and  $0<r< \min\left\lbrace  { \frac{1}{5k}, \frac{\theta}{2}}\right\rbrace $.
	For each $x\in E_k$, there exists a strictly  increasing sequence $\{t_j(x)\}_{j=1}^{\infty}$ such that $\mu (B(x, t_j(x), r, \phi))\geq e^{-(s+\epsilon )t_j(x)}$ for all $j\geq 1.$ So, for any $N\geq 1$,
	the set $E_k$ is contained  in the  union  of the sets in  the family $\mathcal{F}=\{B(x, t_j(x), r, \phi): x\in E_k, t_j(x)\geq N\}$.
	By Lemma  \ref{Dou}, there exists  a finite or countable  subfamily $\mathcal{G}=\{B(x_i, t_i, r, \phi)\}_{i\in I}\subset \mathcal{F}$ consisting of disjoint balls such that，
	$$E_k \subseteq \bigcup_{i\in I}B(x_i, \hat{t}_i, 5r, \phi). $$
	where $\hat{t}_i=(1-\eta)^2t_i\geq (1-\eta)^{2}N, \mu (B(x_{i}, t_i(x), r, \phi))\geq e^{-(s+\epsilon )t_i(x)}$ for all $i \in I .$ Therefore,
	
	$$\mathcal{M}_{(1-\eta)^{2}N, 5r }^{\frac{s+\epsilon}{(1-\eta)^{2}}}(\phi, E_{k})\leq   \sum_{i\in I} e^{-\frac{s+\epsilon}{(1-\eta)^{2}}\hat{t}_i} =\sum_{i\in I} e^{-(s+\epsilon )t_i}\leq \sum_{i\in I} \mu (B(x_i, t_i, r, \phi))\leq 1,$$
	where the disjointness of $\{B(x_i, t_i, r, \phi)\}$ is used in the last inequality. It follows that 
	$$\mathcal{M}_{ 5r }^{\frac{s+\epsilon}{(1-\eta)^{2}}}(\phi, E_k)=\lim\limits_{N\to \infty }\mathcal{M}_{(1-\eta)^{2}N, 5r }^{\frac{s+\epsilon}{(1-\eta)^{2}}}(\phi, E_k)\leq 1,$$
	which implies that 
	$\mathcal{M}_{ 5r }^{\frac{s+\epsilon}{(1-\eta)^{2}}}(\phi, E_k)\leq 1$  for all $0< r < \frac{1}{5k}$.
	
	 Letting $r\to 0$  yields 
	$\mathcal{M}^{\frac{s+\epsilon}{(1-\eta)^{2}}}(\phi, E_k)< 1$, hence 
	 $h_{top}^{B}(\phi, E_k)\leq  \frac {s+\epsilon}{(1-\eta)^{2}}$   for any $k \geq 1$.
	Since the Bowen entropy is countable stable(see Proposition \ref{w}), it follows that 
	
	$$h_{top}^{B}(\phi, E)=h_{top}^{B}(\phi, \bigcup_{k=1}^{\infty} E_k)=\sup_{k\geq 1}h_{top}^{B}(\phi, E_k)\leq \frac {s+\epsilon}{(1-\eta)^{2}}.$$
	Let $\eta \rightarrow 0$, we have $h_{top}^{B}(\phi, E)\leq s+\epsilon.$ Therefore, $h_{top}^{B}(\phi, E) \leq s$ as $\epsilon$ is arbitrary.
	
	(2) For $0<\eta <1$, let $\theta>0$ be as in  Lemma \ref{imp}. Let us first fix  an $\epsilon>0$. For each $k\geq 1$, put 
	$$E_k=\left\{ x\in E:~ \liminf\limits_{t\to \infty } \dfrac{-\log \mu (B(x, t, r, \phi ))}{t} > s-\epsilon 
~\text{for~all}~ r\in (0, \dfrac{1}{k}) 	\right\}.$$
	
	Since $\underline{h}_{\mu}(x)\geq s$ for all $x\in E$, the sequence $\{E_k\}_{k=1}^{\infty}$ increases to $E$.
	So by the continuity of the measure, we have 
	$$\lim\limits_{k\to \infty } \mu (E_k)=\mu(E)>0.$$
	Then fix some $k\geq 1$
	with $\mu(E_k)>\frac{1}{2}\mu(E)$. For each $N\geq 1$, put
	$$E_{k, N}=\left\{ x\in E_k:~  \dfrac{-\log \mu (B(x, t, r, \phi ))}{t} > s-\epsilon ~\text{for all }~ t\geq N 
 ~\text{and}~ r\in (0, \dfrac{1}{k})	\right\}.$$
	Since the sequence $\{E_{k, N}\}_{N=1}^{\infty }$ increases to $E_k$, we may pick an $N'\geq 1$ such that $\mu(E_{k, N'})>\frac{1}{2}\mu(E_k)$.
	Write  $E'=E_{k, N'}$ and $r'=\frac{1}{k}$, then $\mu (E')>0$.
	Since $B(x,t, r, \phi)\subseteq B(x,t, \dfrac{1}{k}, \phi) $ for $0<r\leq \dfrac{1}{k}$, we obtain that  
	\begin{align}\label{eqq}
	\mu(B(x, t, r, \phi ))\leq e^{-(s-\epsilon )t}.
	\end{align}
	for all $x\in E', 0<r\leq \min\left\lbrace {r', \theta}\right\rbrace , t\geq N'$.
	
	Now suppose that  $\mathcal{F}=\{ B(y_i, t_i, \frac{r}{2}, \phi)\}_{i\geq 1}$ be  a countable or finite family  so that $y_{i}\in X,$ $\widetilde{t_{i}}=(1-\eta)t_{i} \geq N'$ and  $\bigcup\limits_{i} B(y_i, t_i, \frac{r}{2}, \phi) \supset E'$. We may assume that for each $i$, $B(y_i, t_i, \frac{r}{2}, \phi)\bigcap E' \neq \emptyset $
	for all $ i\geq 1, 0<r\leq \min \left\lbrace r', \theta \right\rbrace $.
	
	For each  $i\geq 1$, we can choose    $x_i\in E'\cap B(y_i, t_i, \frac{r}{2}, \phi)$.
    By  Lemma \ref{r}, we have  
	$$B(y_i, t_i, \frac{r}{2}, \phi)\subset B(x_i, \widetilde{t_i},  r , \phi).$$
In combination  with  (\ref{eqq}), this implies 
	\begin{align*}
	\sum_{i\geq 1} e^{-(1-\eta)t_i(s-\epsilon )}&= \sum_{i\geq 1} e^{-\widetilde{t_{i}}(s-\epsilon)}\\&
	 \geq \sum_{i\geq 1} \mu (B(x_i, \widetilde {t_i}, r, \phi))\geq \sum_{i\geq 1} \mu (B(y_i,  {t_i}, \frac{r}{2}, \phi)) \geq \mu (E').
	\end{align*}
	It follows that 
	$\mathcal{M}^{(1-\eta) (s-\epsilon )}(\phi, E')\geq \mathcal{M}^{(1-\eta) (s-\epsilon )}_{\frac{N}{1-\eta}, \frac{r}{2}}(\phi, E') \geq \mu (E')>0.$
	
	Therefore $h_{top}^B(\phi, E')\geq (1-\eta)(s-\epsilon) $.
	Let $\eta \rightarrow 0$, we then have 
	$h_{top}^B(\phi, E')\geq s-\epsilon $.
	Hence $h_{top}^B(\phi, E')\geq s$  as $\epsilon $ is arbirtary small. Hence $h_{top}^B(\phi, E)\geq s$. This completes the proof of the theorem.

\end{proof}

\end{document}